\documentclass[12pt, reqno]{amsart}

\usepackage[utf8]{inputenc}
\usepackage[T1]{fontenc}
\usepackage{tikz,tikz-3dplot}
\usepackage{amsmath,amsfonts,amssymb,amsmath,latexsym,mathrsfs}
\usepackage{shuffle}
\usepackage[all]{xy}
\usepackage{stackengine}
 
\usepackage{enumerate}

\usepackage{hyperref}
 
\usepackage{tabu}
\usepackage{tikz-cd} 

\usepackage{comment}
\usepackage{url}
\usepackage{tikz}

\usepackage{xfrac}

\newtheorem{theorem}{Theorem}[section]

\newtheorem{proposition}[theorem]{Proposition}
\newtheorem{definition}[theorem]{Definition}

\newtheorem{corollary}[theorem]{Corollary}

\newtheorem{remark}[theorem]{Remark}

\newtheorem{example}[theorem]{Example}

\newcommand{\C}{ {\mathbb C} }

\begin{document}

\title{On irreducible representations of quandles}
\keywords{Groups, Quandles, envelopping groups, representations, Schur multipliers, Schur covers}

\author{Mohamad \textsc{Maassarani} }
\maketitle
\begin{abstract}We consider irreducible representations of finite quandles over $\mathbb{C}$. For $Q$ a finite quandle whose inner automorphism group $Inn(Q)$ have trivial Schur multipliers, we prove that the irreducible representations of $Q$ can be constructed out of what we call characters of $Q$ and irreducible linear represenations of the group $Inn(Q)$. For $G$ a finite groiup having trivial Schur multiplier or being a Schur cover of another group, we show that the irreducible representations of the conjugacy quandle $Conj(G)$ can be constructed out of characters of $Conj(G)$ and irreducible linear representations of the group $G$. In both cases, the finite unitary irreducible representations can be determined from the results. For instance, these results allow to solve the problem of constucting irreducible represenations of the conjugacy quandles of dihedral groups and generalised quaternion groups. In general, we relate the irreducible representations of a finite quandle $Q$ to irreducible projective representations of $Inn(Q)$ and prove that the irreducible representations of $Q$ can be in theory constructed out of characters of $Q$ and irreducible representations of a finite quotient of the enveloping group $G(Q)$. The quotient is a stem extensions of $Inn(Q)$ with nucleus a finite subgroup of the center of $G(Q)$. This allows, using a result from the litterature, to show that the irreducible quandle representations of $Conj(S_n)$ ($S_n$ the symmetric group) can be constructed out of characters of the corresponding quandle and irreducible linear group representations of the symmetric group. 
\end{abstract}
\section*{Introduction and main results}
These notes are related to represenation theory over $\mathbb{C}$ of finite quandles. The basic definitions are given in the first section. A quandle $Q$ is a set equipped with binary operation satisfying some axioms. To each quandle $Q$ one associates an inner automorphism group $Inn(Q)$ generated by translation. For a group $G$ there is an associated quandle $Conj(G)$ called the conjugacy quandle of the group. One can associate to a quandle a group $G(Q)$ and a quandle morphism $Q \to G(Q)$ that extends any quandle morphism $Q\to Conj(G)$ into a group morphism $G(Q)\to G$. This group is known as the enveloping group, structure group, associated group or adjoint group.  In \cite{rep}, the authors define a quandle representation of quandle $Q$ (more generally a rack representation) as a quandle morphism $Q\to Conj(GL(V))$. They also define a notion of strong represenations of a quandle.  In \cite{str}, the authors show that there is a correspondance between strong representations of a finite quandle and linear representations of its inner automorphism group. They also consider irreducible strong representations. In \cite{GQ}, we prove that irreducible representations (not necesseraly strong) of finite quandles over $\mathbb{C}$ are finite dimensional. In these notes we consider irreducible representations (not necessarly strong) of finite quandles over $\C$.
\subsection*{Outline of the paper :}\ \\\\
In section $1$, we recall some definitions and basic properties concerning: quandles, inner automorphisms of quandles, conjugacy quandle of a group, envelopping groups and quandle representations. We also remind some elements on projective representations.\\\\
In section $2$, we establish the correpondance between irreducible representations of a quandle $Q$ and irreducible representations of its envelopping group $G(Q)$. We definie characters of a quandle wich correpond to quandle morphisms to $\mathbb{C}^\times$. We show that an irreducible representation of a finite quandle $Q$ induces an irreducible projective representation of its inner automorphism group $Inn(Q)$. We show that two irreducible representations of a finite quandle inducing the same projective representation differ by a character. We use this to prove that :
\begin{itemize}
\item[-] if the Schur multiplier of $Inn(Q)$ ($Q$ finite) is trivial then the irreducible representations of $Q$ can be constructed out of characters of $Q$ and linear representations of $Inn(Q)$.
\item[-] if a group $G$ ($G$ finite) is a Schur cover of another group or has trivial Schur multiplier then, the irreducible representations of $Conj(G)$ can be constructed of characters of $Conj(G)$ and linear irreducible representations of the group $G$.
\end{itemize}
In both cases the unitary (taking values in the unitary group) irreducible quandle representations can be determined. The second case provide examples where irreducible projective representations of $Inn(Q)$ that do not lift to linear representations of $Inn(Q)$ are induced by irreducible quandle representations. \\\\
In section 3, we show that in theory the irreducible representations of a finite quandle $Q$ can be constructed out of characters and irreducible representations of a finite quotient of the envelopping group $G(Q)$. One can distinguish the irreducible quandle representations taking values in the unitary group. We also prove that a finite quandle $Q$ defines a subgroup $M_Q$ of the Schur multiplier of $Inn(Q)$ corresponding to classes of irreducible projective representations of $Inn(Q)$ induced by $Q$. $M_Q$ is related to a torsion subgroup of the center of $G(Q)$. We determine $M_Q$ for some examples. This allows to compute the torsion subgroup of the center of $G(Q)$ for these examples. This also allows to prove that the irreducible quandle representations of the conjugacy quandle of the Symmetric group can be constructed out of characters of the quandle and irreducible linear group representations of the symmetric group. For that we use a result from the litterature asserting that the enveloping group of the conjugacy quandle of the symmetric group has no torsion (\cite{leb}). 
\section{Quandles, inner automorphisms, conjugacy quandles, envelopping groups and quandle representations}

\subsection{Quandles, quandle morphisms}
\begin{definition}
 A quandle is a set $Q$ equipped with a binary operation $\triangleright:Q\times Q \to Q$  satisfying the following :
\begin{itemize}
\item[-] $x\triangleright x=x$ for $x\in Q$.
\item[-] $x \triangleright( y\triangleright z)= (x \triangleright y) \triangleright (x \triangleright z)$, for $x,y,z \in Q$. 
\item[-] For all $x,y \in Q$ there exist a unique $z\in Q$, such that $x \triangleright z=y$.
\end{itemize}
\end{definition}
\begin{definition}
 A quandle morphism is a map between two quandles $(Q,\triangleright_Q)$ and $(Q' ,\triangleright_{Q'})$ such that $f(z \triangleright_Q w)=f(z) \triangleright_{Q'} f(w)$, for $z,w \in Q$.
\end{definition}
The composition of two quandle morphism is a quandle morphism.
\begin{definition}
A subquandle of $(Q,\triangleright)$ is a subset $Q'$ such that $\triangleright$ restricts to quandle structure on $Q'$.

\end{definition} The image of a quandle morphism is a subquandle.  
\subsection{Inner automorphisms of quandles}
For $Q$ a quandle it follows from the definition of a quandle we took that for $x\in Q$  the map $L_x: Q\to Q$ given by $L_x(y)=x\triangleright y$ is a bijective quandle morphism. 
\begin{definition}
For $Q$ a quandle, the group of inner automorphism $Inn(Q)$  of $Q$  is the subgroup of the group of bijections of $Q$ generated by the maps $L_x$ for $x\in Q$.
\end{definition}
\begin{proposition}
The map $Q\to Inn(Q), x\mapsto L_x$ is a quandle morphism if $Inn(Q)$ is endowed with the conjugacy quandle structure.
\end{proposition}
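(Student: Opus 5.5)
We need to show that $L_{x\triangleright y} = L_x \circ L_y \circ L_x^{-1}$. The conjugacy quandle structure on a group is not yet defined in the paper at this point. I take the standard convention $g \triangleright h = g h g^{-1}$, which is the one compatible with $x\triangleright x = x$ and with left translations. The claim then amounts to the identity
\[
L_{x\triangleright y}=L_x\circ L_y\circ L_x^{-1}\quad\text{in } Inn(Q),\qquad x,y\in Q.
\]

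The plan is to evaluate both sides on an arbitrary $z\in Q$. Since $L_x$ is a bijection (by the third axiom), every $z$ can be written uniquely as $z=x\triangleright w$ with $w=L_x^{-1}(z)$. The right-hand side then gives $(L_x\circ L_y\circ L_x^{-1})(z)=x\triangleright(y\triangleright w)$. By the second (self-distributivity) axiom this equals $(x\triangleright y)\triangleright(x\triangleright w)=(x\triangleright y)\triangleright z=L_{x\triangleright y}(z)$. Equivalently, self-distributivity says exactly that $L_x\circ L_y=L_{x\triangleright y}\circ L_x$, and composing on the right with $L_x^{-1}$ gives the identity.

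One remark completes the argument: each $L_x$ lies in $Inn(Q)$ by definition, so the map $x\mapsto L_x$ does land in $Inn(Q)$. The conjugacy quandle structure on $Inn(Q)$ is just the restriction of conjugation in the group of bijections of $Q$, so the identity above is precisely the morphism property.

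No step is a genuine obstacle. The only point needing care is the convention for the conjugacy quandle: with $g\triangleright h=g^{-1}hg$ one would instead use the map $x\mapsto L_x^{-1}$, or right translations. I would state the convention $g\triangleright h=ghg^{-1}$ explicitly when the conjugacy quandle is defined.
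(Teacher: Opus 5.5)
Your proof is correct. The paper states this proposition without proof, and your observation that self-distributivity is exactly the identity $L_x\circ L_y=L_{x\triangleright y}\circ L_x$ is the standard verification it implicitly relies on. Your convention $g\triangleright h=ghg^{-1}$ is the one the paper adopts for $Conj(G)$ in the next subsection. Your aside about ``right translations'' is wrong in this setting, since $y\mapsto y\triangleright x$ need not even be a bijection (it is constant in a trivial quandle), but it plays no role in your proof.
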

\begin{definition}
The orbits of a quandle $Q$ are the orbits of $Q$ under the action of $Inn(Q)$.
\end{definition}
\subsection{Conjugacy quandle of a group}
\begin{definition}
For a group $G$, the conjugacy quandle $Conj(G)$ of $G$ is a quandle consisting of the set $G$ equipped with the binary operation $x\triangleright y= xyx^{-1}$  for $x,y\in G$.
\end{definition}
A group morphism $f:G\to H$  is a quandle morphism with respect to the conjugacy quandle strucutre just defined. 
\begin{proposition}
For $G$ a group, the group $Inn(Conj(G))$ is equal to $Inn(G)\simeq G/Z(G)$ where $Z(G)$ is the center of $G$ and the orbits of $Inn(Conj(G))$ are the conjugacy classes of $G$.
\end{proposition}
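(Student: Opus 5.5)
For $x\in G$, the translation of $Conj(G)$ is $L_x(y)=xyx^{-1}$, i.e. conjugation by $x$, which is the inner automorphism $c_x\in Inn(G)$. The plan is to compare the two groups as subgroups of the group of bijections of the set $G$. By definition, $Inn(Conj(G))$ is the subgroup generated by the maps $L_x=c_x$ for $x\in G$. Since $x\mapsto c_x$ is a group morphism $G\to Aut(G)$, the set $\{c_x : x\in G\}$ is already a subgroup: it is closed under composition because $c_x\circ c_y=c_{xy}$, and under inverses because $c_x^{-1}=c_{x^{-1}}$. So the subgroup generated by the $L_x$ is exactly $\{c_x\}_{x\in G}=Inn(G)$, and the two groups coincide as subgroups of the bijections of $G$.

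For the isomorphism $Inn(G)\simeq G/Z(G)$, apply the first isomorphism theorem to the surjective morphism $G\to Inn(G)$, $x\mapsto c_x$. Its kernel consists of the $x$ with $xyx^{-1}=y$ for all $y$, which is precisely $Z(G)$.

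For the orbits: the $Inn(Conj(G))$-orbit of $y$ is $\{c_x(y) : x\in G\}=\{xyx^{-1} : x\in G\}$. This holds because, by the first step, every element of $Inn(Conj(G))$ is a single $c_x$. That set is the conjugacy class of $y$.

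No step is a real obstacle. The only point to state explicitly is that the generated subgroup needs no further closure, because $\{c_x\}_{x\in G}$ is already a group.
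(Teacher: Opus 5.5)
Your proof is correct. The key observation is that $\{c_x : x\in G\}$ is already closed under composition and inversion ($c_x\circ c_y=c_{xy}$, $c_x^{-1}=c_{x^{-1}}$), which gives $Inn(Conj(G))=Inn(G)$; the kernel computation and the orbit description then follow. The paper states this proposition as standard background without proof, and your argument is the routine verification one would supply.
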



\subsection{Quandle representations}
\begin{definition}
\item[1)] A linear representation of a quandle $Q$ over a vector space $V$ is a quandle morphism $\rho: Q \to Conj(\mathrm{GL}(V))$, i.e. : 
$$\rho(x\triangleright y)=\rho(x) \rho(y) \rho(x)^{-1},$$
for all $x,y \in Q$.
\item[2)] A subrepresentation of $\rho$ is a vector subspace $W\subset V$, such that $\rho(x)(W) \subset W$ for all $x\in Q$.
\item[3)] An irreducible representation is a representation $V$ that has no subrepresentations other than $0$ and $V$.
\end{definition} 
We define intertwining operators and equivalence of quandle representations as for groups. Note that if $f:Q \to Conj(G)$ is a quandle morphism and $\rho : G \to GL(V)$ is a group represenation of $G$, then $\rho\circ f$ is a quandle representation.
\subsection{Envelopping group}

\begin{definition}
The enveloping group $G(Q)$ of a quandle $Q$ is the group definied by generators and relations :
$$G(Q)=\langle x \in Q \vert xyx^{-1}=x\triangleright y , \text{ for } x,y\in Q\rangle.$$
\end{definition}
The enveloping group is also called : the associated group, adjoint group or structure group. The group $G(Q)$ is infinite since word length (length of a geneartor is $1$ and the inverse of such element has lenght $-1$) induces a non trivial morphism $G(Q)\to \mathbb{Z}$. 
\begin{proposition}
 
\item[1)] The map $\varphi_Q : Q \to G(Q)$ mapping $x\in Q$ to the corresponding generator is a quandle morphism if we endow $G(Q)$ with the conjugacy quandle structure.
\item[2)]If $f:Q\to Conj(G)$ is a quandle morphism then there is a unique group morphism $f_{G(Q)}:Q\to G$ such that the following diagram commutes :
$$\begin{tikzcd}
G(Q) \arrow{r}{f_{G(Q)}} &G  \\ 
Q \arrow{u}{\varphi_Q} \arrow{ru}{f} & 
\end{tikzcd} $$
 
\end{proposition}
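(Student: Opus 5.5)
Part 1) is immediate from the presentation of $G(Q)$. For $x,y\in Q$ the defining relation $xyx^{-1}=x\triangleright y$ holds in $G(Q)$. Reading this relation through $\varphi_Q$ gives
$$\varphi_Q(x)\varphi_Q(y)\varphi_Q(x)^{-1}=\varphi_Q(x\triangleright y).$$
This is exactly the statement that $\varphi_Q$ is a quandle morphism $Q\to Conj(G(Q))$. Nothing more is needed.

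For 2), I will use the universal property of a group given by generators and relations. Write $G(Q)=F(Q)/N$, where $F(Q)$ is the free group on the set $Q$ and $N$ is the normal subgroup generated by the words $xyx^{-1}(x\triangleright y)^{-1}$ for $x,y\in Q$. (The statement writes the domain of $f_{G(Q)}$ as $Q$; it is meant to be $G(Q)$.)

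Given a quandle morphism $f:Q\to Conj(G)$, the universal property of free groups gives a unique group morphism $\tilde f:F(Q)\to G$ with $\tilde f(x)=f(x)$ for all $x\in Q$. Evaluating $\tilde f$ on a relator gives
$$f(x)f(y)f(x)^{-1}f(x\triangleright y)^{-1}.$$
This equals $1$ because $f(x\triangleright y)=f(x)\triangleright f(y)=f(x)f(y)f(x)^{-1}$ in $Conj(G)$. So every relator lies in $\ker\tilde f$. Since $\ker\tilde f$ is normal, $N\subset\ker\tilde f$, and $\tilde f$ factors through a group morphism $f_{G(Q)}:G(Q)\to G$. By construction $f_{G(Q)}\circ\varphi_Q=f$, so the diagram commutes.

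Uniqueness holds because $G(Q)$ is generated by $\varphi_Q(Q)$. Any two group morphisms $G(Q)\to G$ that agree with $f$ on these generators therefore agree on all of $G(Q)$.

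There is no real obstacle here. The only point needing care is checking that the relators die under $\tilde f$, and this is exactly the quandle morphism condition on $f$.
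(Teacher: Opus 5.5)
Your argument is correct and complete, and you are right that the domain of $f_{G(Q)}$ should be $G(Q)$ rather than $Q$. The paper recalls this proposition as a standard fact without proof, and your argument is the standard one it implicitly relies on: the defining relations give 1); for 2), the relators are killed by the lift $\tilde f$ from the free group $F(Q)$ precisely because $f$ is a quandle morphism, and uniqueness holds because $\varphi_Q(Q)$ generates $G(Q)$.
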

\subsection{Projective representations}
 In this subsection the groups we consider are \textbf{finite} and the vector spaces are \textbf{complex}.
\begin{definition}
\item[1)]  A projective representation of a group $G$ is a morphism $G\to PGL(V)$.
\item[2)]  A projective representation $G \to PGL(V)$ is irreducible if $P(V)$ admit no proper projective subspace stable by $G$.
\item[3)] Two projective representations are projectively equivalent if they are conjugate one to another by an invertible projective transformation.
\end{definition} 
Let $\rho : G \to PGL(V)$ be a projective representation. Let $\tilde{\rho}$ be a lift of $\rho$ to $GL(V)$ such that $\tilde{\rho}(1)=Id$. For $g,h\in G$ :
$$\tilde{\rho}(gh)=\alpha_{\tilde{\rho}}(g,h)\tilde{\rho}(g)\tilde{\rho}(h),$$
for some $\alpha_{\tilde{\rho}}(g,h)\in \mathbb{C}^\times$. One can check that the mapping $\alpha_{\tilde{\rho}} :G\times G\to \mathbb{C}^\times$ corresponding to the constants is a $2$-cocycle with values in $\mathbb{C^\times}$. Let $\tilde{\rho}'$ be another lift of $\rho$ to $GL(V)$ such that $\tilde{\rho}'(1)=Id$. The cocycles $\alpha_{\tilde{\rho}}$ and $\alpha_{\tilde{\rho}'}$ differ by a boundry. Hence, the projective representation $\rho$ definies a cohomology class $[\rho]$ in the group $H^2(G,\mathbb{C}^\times)$ called the \textbf{Schur multiplier} of $G$. The Schur multiplier of a finite group is finite.
\begin{proposition}
\item[1)] A projcetive representation $\rho :G \to PGL(V)$ lifts to a linear representation $G\to GL(V)$ if and only if its class in $H^2(G,\mathbb{C}^\times)$ is trivial.
\item[2)] Two projectively equivalent representations definie the same cohomology class in $H^2(G,\mathbb{C}^\times)$.
\end{proposition}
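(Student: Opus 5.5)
The statement has two parts, and I will prove them in order using only the cocycle bookkeeping set up just before it. Throughout, fix a projective representation $\rho : G\to PGL(V)$ and a lift $\tilde{\rho}$ to $GL(V)$ with $\tilde{\rho}(1)=Id$, so that $\tilde{\rho}(gh)=\alpha_{\tilde{\rho}}(g,h)\tilde{\rho}(g)\tilde{\rho}(h)$.

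For 1), first suppose $\rho$ lifts to a linear representation $\sigma : G\to GL(V)$. A group morphism satisfies $\sigma(1)=Id$, so $\sigma$ is itself an admissible lift. Its cocycle is $\alpha_\sigma\equiv 1$, and since any two lifts give cohomologous cocycles, $[\rho]$ is trivial. Conversely, suppose $[\rho]$ is trivial. Then $\alpha_{\tilde{\rho}}$ is a coboundary, that is,
$$\alpha_{\tilde{\rho}}(g,h)=\beta(gh)\beta(g)^{-1}\beta(h)^{-1}$$
for some $\beta : G\to \C^\times$, where the sign and inversion conventions are adjusted to match the convention $\tilde{\rho}(gh)=\alpha\,\tilde{\rho}(g)\tilde{\rho}(h)$ used above. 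Set $\sigma(g)=\beta(g)^{-1}\tilde{\rho}(g)$. A direct check gives $\sigma(gh)=\sigma(g)\sigma(h)$. Evaluating $\alpha(1,1)=\beta(1)^{-1}$ against $\alpha(1,1)=1$, which follows from $\tilde{\rho}(1)=Id$, shows $\beta(1)=1$, so $\sigma(1)=Id$. Hence $\sigma$ is a linear representation with the same image in $PGL(V)$ as $\rho$, i.e. a lift of $\rho$.

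For 2), let $\rho' = [T]\rho[T]^{-1}$ with $T\in GL(V)$. Then $T\tilde{\rho}T^{-1}$ is a lift of $\rho'$ that still sends $1$ to $Id$. Because the scalars $\alpha_{\tilde{\rho}}(g,h)$ commute with $T$, this lift has exactly the cocycle $\alpha_{\tilde{\rho}}$. By the independence of the class from the choice of lift, $[\rho']=[\rho]$. If the equivalence is between representations on different spaces $V$ and $V'$ through an isomorphism $T : V\to V'$, the same computation applies verbatim.

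The only delicate step is getting the coboundary convention consistent with the paper's definition of $\alpha$, so that the rescaling in 1) yields a homomorphism rather than an anti-homomorphism. If the convention is off, one replaces $\beta$ by $\beta^{-1}$. Everything else in the argument is formal.
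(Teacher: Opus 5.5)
Your proof is correct and is the standard cocycle argument that the paper takes for granted; the paper states this proposition as background without proof, relying on the lift/cocycle set-up that precedes it. The convention issue you flag is harmless: with your displayed formula $\alpha_{\tilde{\rho}}(g,h)=\beta(gh)\beta(g)^{-1}\beta(h)^{-1}$, the rescaled map $\sigma(g)=\beta(g)^{-1}\tilde{\rho}(g)$ is already multiplicative, and since coboundaries form a subgroup you may always replace $\beta$ by $\beta^{-1}$ anyway.
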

One can use twisted algebras to prove the following (\cite{Sc} p. 40-42) :
\begin{proposition}
One can associate to each class $\alpha$ of the Schur multiplier $H^2(G,\mathbb{C}^\times)$ at least one irreducible projective representation of $G$ over $\mathbb{C}$ having $\alpha$ as a class.
\end{proposition}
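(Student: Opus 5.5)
Given a class $\alpha\in H^2(G,\mathbb{C}^\times)$, I will build a projective representation with class $\alpha$ out of the twisted group algebra and then take an irreducible constituent.

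Fix a normalized $2$-cocycle $c$ representing $\alpha$. Normalize it so that $c(1,g)=c(g,1)=1$; this is possible by modifying $c$ by a coboundary. I will match the convention of the paper, in which a lift $\tilde\rho$ satisfies $\tilde{\rho}(gh)=\alpha_{\tilde\rho}(g,h)\tilde\rho(g)\tilde\rho(h)$. Accordingly, define the twisted group algebra $\mathbb{C}^{c}[G]$ with basis $\{e_g\}_{g\in G}$ and product $e_ge_h=c(g,h)^{-1}e_{gh}$. The cocycle identity gives associativity, and $e_1$ is the unit.

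The decisive step is semisimplicity of $\mathbb{C}^c[G]$. I will argue this by Maschke's averaging argument. Let $W\subset M$ be a submodule of a finite-dimensional module, and let $\pi$ be any linear projection onto $W$. Replace $\pi$ by
$$\pi'=\frac{1}{|G|}\sum_{g\in G} e_g\,\pi\, e_g^{-1}.$$
Each $e_g$ is invertible, because $e_ge_{g^{-1}}$ is a nonzero multiple of $e_1$. A computation using the cocycle identity shows that the scalars cancel and $\pi'$ commutes with every $e_h$. Hence $\ker\pi'$ is a complement to $W$. This verification is the only place where care is needed: one has to check that $e_h e_g$ and $e_{hg}$ differ by the scalar $c(h,g)^{-1}$, and that this scalar cancels against the one coming from $e_g^{-1}$. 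Since $|G|$ is invertible in $\mathbb{C}$, the argument goes through.

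Now take $V$ to be any simple module of $\mathbb{C}^c[G]$, for instance a simple summand of the regular module. It is finite-dimensional and nonzero. Define $\tilde\rho(g)$ to be the action of $e_g$ on $V$. Then $\tilde\rho(1)=Id$ and $\tilde\rho(g)\tilde\rho(h)=c(g,h)^{-1}\tilde\rho(gh)$, that is, $\tilde\rho(gh)=c(g,h)\tilde\rho(g)\tilde\rho(h)$. So the induced map $\rho:G\to PGL(V)$ is a group morphism, and its cocycle $\alpha_{\tilde\rho}$ equals $c$; thus $[\rho]=\alpha$.

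It remains to check irreducibility. A proper projective subspace $P(W)$ stable under $\rho(G)$ means that $W$ is stable under each $\tilde\rho(g)$, up to scalars, and hence under each $e_g$. Therefore $W$ is a $\mathbb{C}^c[G]$-submodule, and simplicity of $V$ forces $W=0$ or $W=V$. So $\rho$ is an irreducible projective representation with class $\alpha$.

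Strictly speaking, semisimplicity is not needed for existence: any finite-dimensional nonzero module has a simple submodule, which already suffices. I still include the Maschke argument as the structural fact behind the twisted-algebra approach of \cite{Sc}.
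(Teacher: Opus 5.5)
Your proof is correct and follows essentially the paper's route: the paper only cites the twisted group algebra argument in Karpilovsky (pp.~40--42), and you carry it out explicitly, with the convention $e_ge_h=c(g,h)^{-1}e_{gh}$ chosen so that $\alpha_{\tilde\rho}=c$ matches the paper's definition of the class. As you note yourself, the Maschke step is not needed for existence, since any simple submodule of the regular $\mathbb{C}^c[G]$-module already yields the required irreducible projective representation.
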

Let $\theta: G\to H$ be a group morphism.  To a $2$-cocyle $\alpha$ of $H$ with values in $\mathbb{C}^\times$ one can assign the $2$-cocycle $\alpha_{\theta}=\alpha \circ (\theta \times \theta)$ of $G$ with values in $\mathbb{C}^\times $. This assignement induces a morphism $H^2(H,\mathbb{C}^\times)\to H^2(G,\mathbb{C}^\times)$ called the inflation map and will be denoted by $Inf_\theta$. 
\begin{proposition}
Let $\rho:H\to PGL(V)$ be a projective representation and $\theta : G\to H$ be a group morphism. $\rho$ lifts to a linear representation of $G$ via $\theta$, $($i.e. $\rho \circ \theta$ lifts to a linear representation of $G$$)$ if and only if the class of $\rho$ in $H^2(H,\mathbb{C}^\times)$ lies in the kernel of the inflation map $Inf_\theta$. 
\end{proposition}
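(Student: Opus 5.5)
The plan is to reduce to the preceding proposition (item 1), which says a projective representation lifts to a linear representation exactly when its class in $H^2(\cdot,\mathbb{C}^\times)$ is trivial. We apply it to the group $G$ with the projective representation $\rho\circ\theta$. A preliminary remark: for a lift $\tilde\rho$ of $\rho$ with $\tilde\rho(1)=Id$, the map $\tilde\rho\circ\theta$ is a lift of $\rho\circ\theta$ with $\tilde\rho(\theta(1))=Id$. Its associated cocycle is
$$\alpha_{\tilde\rho\circ\theta}(g,h)=\alpha_{\tilde\rho}(\theta(g),\theta(h))=(\alpha_{\tilde\rho})_\theta(g,h).$$
This follows directly by evaluating the defining relation $\tilde\rho(\theta(g)\theta(h))=\alpha_{\tilde\rho}(\theta(g),\theta(h))\tilde\rho(\theta(g))\tilde\rho(\theta(h))$ and using that $\theta$ is a morphism. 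Hence $[\rho\circ\theta]=Inf_\theta([\rho])$ in $H^2(G,\mathbb{C}^\times)$.

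To use this we need $Inf_\theta$ to be well defined on classes. We check that $\alpha_\theta$ is a 2-cocycle: the cocycle identity for $\alpha$ evaluated at $\theta(g),\theta(h),\theta(k)$ gives the identity for $\alpha_\theta$. We also check that a coboundary $\delta\beta$ pulls back to the coboundary $\delta(\beta\circ\theta)$. Both are one-line verifications. The statement presupposes this anyway.

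With the identity $[\rho\circ\theta]=Inf_\theta([\rho])$ in hand, the preceding proposition (item 1) applied to $\rho\circ\theta: G\to PGL(V)$ gives the equivalence directly: $\rho\circ\theta$ lifts to a linear representation of $G$ iff $[\rho\circ\theta]$ is trivial iff $[\rho]\in\ker Inf_\theta$.

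Should one prefer not to rely on that proposition, the direct argument is as follows. Suppose $\sigma:G\to GL(V)$ is a linear lift of $\rho\circ\theta$. Then $\sigma(g)=c(g)\tilde\rho(\theta(g))$ for some function $c:G\to\mathbb{C}^\times$, and comparing $\sigma(gh)=\sigma(g)\sigma(h)$ shows that $\alpha_\theta$ is the coboundary of $c$ (up to sign and convention). Conversely, if $\alpha_\theta=\delta c$, rescaling $\tilde\rho\circ\theta$ by $c^{-1}$ yields a homomorphism. The only mild obstacle is keeping the normalization $\tilde\rho(1)=Id$ and the sign/convention of the coboundary consistent. This is routine, since a coboundary of any function may be normalized so that $c(1)=1$, because $\alpha_\theta(1,1)=1$.
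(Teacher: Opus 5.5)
Your proof is correct. A normalized lift $\tilde\rho$ pulls back to a normalized lift $\tilde\rho\circ\theta$ of $\rho\circ\theta$ with cocycle $\alpha_{\tilde\rho}\circ(\theta\times\theta)$, so $[\rho\circ\theta]=Inf_\theta([\rho])$, and item 1) of the preceding proposition then gives the equivalence. The paper states this proposition without proof as standard background, and your reduction is the standard justification it implicitly relies on; your direct coboundary argument is also fine and shows that finiteness of the groups is not needed.
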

\begin{proposition}
Let $1\to A \to G \overset{\theta}{\to} H $ be a central extention. We have an exact sequence $($inflation-restrection exact sequence with coefficient in $\mathbb{C}^\times$$)$ :
$$1 \to  Hom(H,\mathbb{C}^\times)\to Hom(G,\mathbb{C}^\times)\overset{Res}{\to} Hom(A,\mathbb{C}^\times)\to H^2(H,\mathbb{C}^\times)\overset{Inf_\theta}{\to}H^2(G,\mathbb{C}^\times),$$
where $Res$ is the restriction morphism.
\end{proposition}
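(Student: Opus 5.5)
We prove exactness of
$$1 \to  Hom(H,\mathbb{C}^\times)\to Hom(G,\mathbb{C}^\times)\overset{Res}{\to} Hom(A,\mathbb{C}^\times)\overset{\delta}{\to} H^2(H,\mathbb{C}^\times)\overset{Inf_\theta}{\to}H^2(G,\mathbb{C}^\times)$$
for a central extension, taking $\theta$ surjective with kernel $A$. The first map is $\chi\mapsto\chi\circ\theta$. The connecting map $\delta$ is defined as follows. Choose a set-theoretic section $s:H\to G$ with $s(1)=1$. Then
$$s(h)s(h')=f(h,h')\,s(hh')$$
with $f(h,h')\in A$, and $f$ is a $2$-cocycle with values in $A$ (use centrality of $A$). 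For $\lambda\in Hom(A,\mathbb{C}^\times)$ we set $\delta(\lambda)=[\lambda\circ f]$. A different section changes $f$ by the coboundary of a function $H\to A$, so $\delta$ is well defined. It is a homomorphism because $\lambda\mapsto\lambda\circ f$ is multiplicative.

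\textbf{Exactness at the first two terms.} Injectivity of $\chi\mapsto\chi\circ\theta$ follows from the surjectivity of $\theta$. At $Hom(G,\mathbb{C}^\times)$, a character $\psi$ of $G$ is trivial on $A$ exactly when it factors through $G/A\cong H$.

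\textbf{Exactness at $Hom(A,\mathbb{C}^\times)$.}

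First, $\delta\circ Res=1$. If $\lambda=\psi|_A$, put $c(h)=\psi(s(h))$. Applying $\psi$ to the defining relation of $f$ gives $\lambda(f(h,h'))=c(h)c(h')c(hh')^{-1}$, which is a coboundary.

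Conversely, suppose $\lambda\circ f=\partial c$ for some $c:H\to\mathbb{C}^\times$. Every element of $G$ can be written uniquely as $a\,s(h)$ with $a\in A$. Define
$$\psi(a\,s(h))=\lambda(a)c(h).$$
A direct check using centrality and the cocycle identity shows $\psi$ is a homomorphism, and it restricts to $\lambda$ on $A$. This is the one computation to carry out carefully, keeping the sign/inverse convention of $\partial$ consistent with the convention $\tilde\rho(gh)=\alpha(g,h)\tilde\rho(g)\tilde\rho(h)$ used in the paper.

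\textbf{Exactness at $H^2(H,\mathbb{C}^\times)$.}

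First, $Inf_\theta\circ\delta=1$. The inflated cocycle is $(g,g')\mapsto\lambda(f(\theta g,\theta g'))$. Write $g=a\,s(\theta g)$ and set $b(g)=\lambda(a)$, that is, $b(g)=\lambda\bigl(g\,s(\theta g)^{-1}\bigr)$. Expanding $gg'$ then shows that the inflated cocycle equals the coboundary of $b$, up to the chosen convention.

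Conversely, let $\alpha$ be a $2$-cocycle on $H$ whose inflation is a coboundary, say $\alpha(\theta g,\theta g')=\partial b(g,g')$. Restricting to $g,g'\in A$ makes the left side trivial, so $b|_A=:\lambda$ is a character of $A$. Next compare $b(a\,s(h))$ with $\lambda(a)b(s(h))$, using $\alpha(1,h)=\alpha(h,1)=1$ for a normalized cocycle; the two agree up to a constant that can be absorbed. Then set $c(h)=b(s(h))$. Evaluating $\partial b$ at $(s(h),s(h'))$, and using $s(h)s(h')=f(h,h')s(hh')$, gives
$$\alpha=(\lambda\circ f)\cdot\partial c^{\pm1},$$
hence $[\alpha]=\delta(\lambda)^{\pm1}$, which lies in the image of $\delta$.

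\textbf{Main obstacle.} The hardest part is this last step: verifying that $b|_A$ is multiplicative and that $b$ is compatible with multiplication by $A$, with normalization and sign conventions handled consistently. If that becomes messy, an alternative is to cite the five-term inflation–restriction sequence of Hochschild–Serre for the extension $1\to A\to G\to H\to1$ with trivial coefficients $\mathbb{C}^\times$. That sequence is $0\to H^1(H)\to H^1(G)\to H^1(A)^{H}\to H^2(H)\to H^2(G)$, and it reduces to the stated one because $A$ is central, so $H^1(A)^H=Hom(A,\mathbb{C}^\times)$.
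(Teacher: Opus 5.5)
Your proof is correct, and it goes further than the paper. The paper gives no argument for this proposition: it simply quotes it as the standard inflation--restriction (five-term Hochschild--Serre) sequence. That is exactly your fallback in the last paragraph, and your observation that centrality of $A$ gives $H^1(A,\mathbb{C}^\times)^H=Hom(A,\mathbb{C}^\times)$ is all that is needed to specialize it.

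Your main route is a self-contained factor-set computation instead. It gives you an explicit connecting map $\delta(\lambda)=[\lambda\circ f]$, which makes the paper's later uses transparent. For a stem extension every character of $G$ kills $A\subseteq[G,G]$, so $Res$ is trivial. Then $\delta$ embeds $Hom(A,\mathbb{C}^\times)$ into $H^2(H,\mathbb{C}^\times)$ with image $\ker Inf_\theta$, i.e.\ the classes of projective representations of $H$ that lift to $G$. The citation buys brevity and hides the normalization bookkeeping.

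You are also right to take $\theta$ surjective with kernel $A$. The paper's statement omits the final $\to 1$, but injectivity of the first map and the existence of the section $s$ both require it. Every extension the paper applies the sequence to is surjective.

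The step you flag as the main obstacle closes exactly, so your hedges can be removed. After replacing $\alpha$ by a normalized cohomologous cocycle, $\alpha(1,1)=1$ forces $b(1)=1$ and makes $\lambda=b|_A$ multiplicative. Then $\partial b(a,s(h))=\alpha(1,h)=1$ gives $b(a\,s(h))=\lambda(a)\,b(s(h))$, with no constant to absorb.

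With the convention you fixed earlier, $\partial c(h,h')=c(h)c(h')c(hh')^{-1}$, evaluating at $(s(h),s(h'))$ yields $\alpha=(\lambda\circ f)^{-1}\cdot\partial c$. So $[\alpha]=\delta(\lambda^{-1})$: the inverse belongs on $\lambda\circ f$, not on $\partial c$.

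Similarly, in the converse at $Hom(A,\mathbb{C}^\times)$, the equality $\psi|_A=\lambda$ uses $c(1)=1$, which follows from $f(1,1)=1$. The homomorphism check there needs only centrality and $\lambda\circ f=\partial c$, not the cocycle identity.
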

\begin{definition}
A central extension $1\to A \to G \overset{\theta}{\to} H $ is called a stem extension if the nucleus $A$ seen as a subgroup of $G$ lies in the derived subgroup of $G$.
\end{definition}
For a stem extension $1\to A \to G \overset{\theta}{\to} H $ the inflation-restriction exact sequence gives an exact sequence :
$$0 \to Hom(A,\mathbb{C}^\times)\to H^2(H,\mathbb{C}^\times)\overset{Inf_\theta}{\to}H^2(G,\mathbb{C}^\times).$$
Hence, $A$ is isomorphic to a subgroup of the Shur multiplier for a stem extension.
\begin{definition}
A group $G$ is a Schur cover of $H$, if we have a morphism $\theta :G \to H$, such that $1\to ker(\theta) \to G \overset{\theta}{\to} H\to 1$ is a stem extension with $ker(\theta) \simeq H^2(H,\mathbb{C}^\times)$.
\end{definition}
Hence, if $G$ is a Schur cover of $H$ associated to the morphism $\theta: G\to H$, the kernel of $Inf_\theta$ is the whole group $H^2(H,\mathbb{C}^\times)$. In particular : 
\begin{proposition}
If $G$ is a Schur cover of $H$ associated to the morphism $\theta: G\to H$, then any projective irreducible representation $\rho$ of $H$ lifts via $\theta$ to an irreducible linear representation of $G$ $($i.e. $\rho \circ \theta$ lifts to a linear representation of $G$$)$. 
\end{proposition}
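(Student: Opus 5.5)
The plan is to reduce the statement to two earlier facts: the characterization of liftability via inflation, and the description of $\ker(Inf_\theta)$ coming from the inflation--restriction sequence for a stem extension. Let $\theta : G \to H$ be the morphism realising $G$ as a Schur cover. Then $1\to A \to G \overset{\theta}{\to} H\to 1$ is a stem extension with $A=\ker(\theta)\simeq H^2(H,\mathbb{C}^\times)$. By the earlier proposition on lifting via a morphism, $\rho\circ\theta$ lifts to a linear representation of $G$ exactly when $[\rho]\in \ker(Inf_\theta)$. So the main task is to show that $Inf_\theta$ is the zero map on $H^2(H,\mathbb{C}^\times)$.

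First, from the stem extension we have the exact sequence
$$0 \to Hom(A,\mathbb{C}^\times)\to H^2(H,\mathbb{C}^\times)\overset{Inf_\theta}{\to}H^2(G,\mathbb{C}^\times).$$
So $\ker(Inf_\theta)$ is isomorphic to $Hom(A,\mathbb{C}^\times)$. The group $A$ is finite abelian, because it is isomorphic to the Schur multiplier of a finite group. Hence $Hom(A,\mathbb{C}^\times)\simeq A \simeq H^2(H,\mathbb{C}^\times)$. Thus $\ker(Inf_\theta)$ is a subgroup of the finite group $H^2(H,\mathbb{C}^\times)$ with the same cardinality, so it is the whole group. Therefore every class lies in $\ker(Inf_\theta)$, and $\rho\circ\theta$ admits a lift $\tilde\rho : G\to GL(V)$ with $\tilde\rho(g)$ projecting to $\rho(\theta(g))$.

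Second, I check irreducibility of $\tilde\rho$. The map $\theta$ is surjective, since the sequence ends in $H\to 1$. Hence the image of $\tilde\rho$ in $PGL(V)$ equals the image of $\rho$. Suppose $W\subset V$ is a subspace stable under all $\tilde\rho(g)$. Then its projectivisation $P(W)$ is stable under $\rho(H)$. Irreducibility of $\rho$ forces $W=0$ or $W=V$, so $\tilde\rho$ is irreducible.

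The only delicate step is the cardinality argument. It requires finiteness of $A$, which holds because the Schur multiplier of a finite group is finite, as recalled above. It also requires the non-canonical isomorphism $Hom(A,\mathbb{C}^\times)\simeq A$ for a finite abelian group $A$. Everything else is a direct application of the propositions already stated.
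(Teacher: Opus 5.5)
Your proof is correct and follows the paper's route. The stem-extension inflation--restriction sequence, together with $\ker\theta\simeq H^2(H,\mathbb{C}^\times)$, forces $\ker(Inf_\theta)$ to be all of $H^2(H,\mathbb{C}^\times)$, and the inflation criterion then gives the lift. You also make explicit two steps the paper leaves implicit: the counting argument via $Hom(A,\mathbb{C}^\times)\simeq A$ for finite abelian $A$, and irreducibility of the lift, which follows from the surjectivity of $\theta$.
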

\section{irreducible representations of quandles}
Let $Q$ be a quandle and $\rho : Q \to GL(V)$ be a quandle representation. By the universal property of $\varphi_Q :Q\to G(Q)$ there is a unique group representation $\rho_{G(Q)}:G(Q)\to GL(V)$ such that the following diagram commutes :
$$\begin{tikzcd}
G(Q) \arrow{r}{\rho_{G(Q)}} &GL(V)  \\ 
Q \arrow{u}{\varphi_Q} \arrow{ru}{\rho} & 
\end{tikzcd} .$$ 
For $\rho$ a represenation of $Q$ we will use the notation $\rho_{G(Q)}$ for the corresponding represenation of $G(Q)$.
\begin{proposition}
 
\item[1)] We have a one to one correpondance between quandle represenations of $Q$ and group representations of $G(Q)$ given by $\rho \mapsto \rho_{G(Q)}$.
\item[2)] Irreducible representations of $Q$ correspond to irreducible represenations of $G(Q)$ via this correspondance.
 
\end{proposition}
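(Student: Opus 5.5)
For part 1), I will construct the inverse of the map $\rho \mapsto \rho_{G(Q)}$ explicitly. Given a group representation $\sigma : G(Q)\to GL(V)$, group morphisms are quandle morphisms for the conjugacy quandle structure, and $\varphi_Q$ is a quandle morphism by the Proposition on enveloping groups. Hence $\sigma\circ\varphi_Q : Q \to Conj(GL(V))$ is a quandle representation. I then check the two composites.
\begin{itemize}
\item For a quandle representation $\rho$, we have $\rho_{G(Q)}\circ\varphi_Q=\rho$ by the commutative diagram defining $\rho_{G(Q)}$.
\item For a group representation $\sigma$, the morphism $(\sigma\circ\varphi_Q)_{G(Q)}$ and $\sigma$ are two group morphisms $G(Q)\to GL(V)$ that agree on $\varphi_Q(Q)$. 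The uniqueness part of the universal property then gives $(\sigma\circ\varphi_Q)_{G(Q)}=\sigma$. Equivalently, they agree on the generators of $G(Q)$.
\end{itemize}
So the assignment is a bijection, and it preserves the underlying space $V$.

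For part 2), I will show that $\rho$ and $\rho_{G(Q)}$ have exactly the same subrepresentations, i.e. the same invariant subspaces $W\subset V$.
\begin{itemize}
\item If $W$ is stable under $\rho_{G(Q)}$, it is stable under each $\rho(x)=\rho_{G(Q)}(\varphi_Q(x))$.
\item Conversely, suppose $\rho(x)(W)\subset W$ for all $x\in Q$. The elements $\varphi_Q(x)$ generate $G(Q)$, so every element of $G(Q)$ is a product of the $\varphi_Q(x)^{\pm1}$. It therefore suffices to show that $W$ is stable under $\rho(x)^{-1}$.
\end{itemize}

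This inverse step is the only point needing care. Since $W$ need not be assumed finite dimensional here, I will not argue via dimension. Instead I will use the third quandle axiom: $L_x$ is a bijection of $Q$, so $Q\to Q$, $y\mapsto x\triangleright y$ is surjective. Even so, this only gives $\rho(x)^{-1}$ as a product of conjugates, which does not obviously help.

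My first attempt in the finite-dimensional case (for instance $Q$ finite with $V$ irreducible, which the paper recalls is finite dimensional) is simple. An injective map $\rho(x)|_W:W\to W$ is then surjective, so $\rho(x)^{-1}(W)=W$.

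For the general case I would argue via irreducibility directly. Let $W\neq 0$ be $\rho$-stable and let $W'=\sum_{g\in G(Q)}\rho_{G(Q)}(g)W$, which is $G(Q)$-stable. For $\rho$ irreducible, I want to show that $W=V$ follows from $W$ being nonzero and $Q$-stable. Note that $V'=\rho_{G(Q)}(G(Q))w$, for a vector $w$ in $W$, equals $V$ when $\rho_{G(Q)}$ is irreducible. Conversely, if $\rho_{G(Q)}$ is reducible with proper stable $U$, then $U$ is $Q$-stable, so $\rho$ is reducible.

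For the other direction, if $\rho$ has a proper $Q$-stable $W$, I need a proper $G(Q)$-stable subspace. For this I would take $W_\infty=\bigcap_{n\ge 0}$ of the images under words in the $\rho(x)$. Alternatively, and more simply, I would restrict to the finite-dimensional setting justified by the earlier result of \cite{GQ}, where the injectivity argument above closes the proof.
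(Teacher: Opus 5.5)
Part 1) is correct and is the paper's argument: the universal property, plus the observation that $\sigma\circ\varphi_Q$ is a quandle representation. In part 2) you correctly isolate the point that the paper's one-line proof (``the image of $\varphi_Q$ generates $G(Q)$'') passes over. A $Q$-stable $W$ is only stable under the \emph{monoid} generated by the $\rho(x)$, and one still needs $\rho(x)^{-1}W\subset W$. Your finite-dimensional argument settles this. Combined with \cite{GQ} (irreducible representations of $G(Q)$ are finite dimensional when $Q$ is finite), it proves 2) completely for finite $Q$, which is the setting of everything the paper does afterwards. You should, however, make that hypothesis explicit rather than leave the general case open. Your middle paragraph only re-proves the easy direction ($\rho$ irreducible implies $\rho_{G(Q)}$ irreducible), and the proposed $W_\infty$ does not help, since it can be $0$. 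In fact no argument can work: the hard direction of 2) is false for infinite quandles.

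Here is a counterexample. Let $F$ be the free group on $a,b$, let $Q\subset Conj(F)$ be the subquandle of all conjugates of $a$ and of $b$, and let $V$ have basis $(e_k)_{k\in\mathbb{Z}}$. Put $\sigma(a)e_k=e_{k+1}$ and $\sigma(b)=\sigma(a)D$, where $De_0=2e_0$ and $De_k=e_k$ for $k\neq 0$. An element of $F$ with exponent sum $m$ maps each line $\mathbb{C}e_k$ onto $\mathbb{C}e_{k+m}$. Every element of $Q$ has exponent sum $1$, so $W=\mathrm{span}(e_k)_{k\geq 0}$ is a proper nonzero subrepresentation of $\rho=\sigma|_Q$; here your $W_\infty$ is $0$. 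On the other hand, $\sigma$ is irreducible. Take a nonzero $F$-stable $U$; after applying a power of $\sigma(a)$, it contains a vector $v$ whose $e_0$-coefficient $c_0$ is nonzero. Since $D=\sigma(a^{-1}b)$, we get $(D-1)v=c_0e_0\in U$, and then every $e_k=\sigma(a)^ke_0$ lies in $U$. Since $Q$ generates $F$, $\rho_{G(Q)}$ is $\sigma$ composed with the surjection $G(Q)\to F$ induced by the inclusion. Hence $\rho_{G(Q)}$ is irreducible while $\rho$ is not. So 2) needs $Q$ finite (or $V$ finite dimensional), and under that hypothesis your proof is complete.
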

\begin{proof}
$1)$ follows from the universal property and the fact that a group represenation of $G(Q)\to GL(V)$ composed by $\varphi_Q$ gives a quandle represenation $Q\to GL(V)$. $2)$ is true since the image of $\varphi_Q$ generates $G(Q)$ and hence a subrepresentations of $\rho$ correpond to subrepresentations of $\rho_{G(Q)}$.
\end{proof}
\begin{definition}
A character of a quandle $Q$ is a quandle morphism $\chi : Q \to \mathbb{C}^\times$, i.e. :$$\chi(x\triangleright y) =\chi(y),$$ for $x,y\in Q$.
\end{definition}
\begin{proposition}
\item[1)] By the universal property of $G(Q)$ there is a one to one correpondance between characters of $Q$ and multiplicative characters of $G(Q)$.
\item[2)] Characters of a quandle $Q$ are the maps to $\mathbb{C}^{\times}$ constant on the orbits of $Q$ with respect to the action of $Inn(Q)$.
\end{proposition}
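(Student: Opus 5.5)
Part 1) follows from the universal property of $\varphi_Q$, applied with target the abelian group $\mathbb{C}^\times$. Since $\mathbb{C}^\times$ is abelian, its conjugacy quandle is trivial: $a\triangleright b = aba^{-1} = b$. Hence a quandle morphism $\chi: Q\to Conj(\mathbb{C}^\times)$ is exactly a map satisfying $\chi(x\triangleright y)=\chi(y)$, which is the definition of a character of $Q$. By the universal property, each such $\chi$ extends uniquely to a group morphism $\chi_{G(Q)}:G(Q)\to\mathbb{C}^\times$ with $\chi_{G(Q)}\circ\varphi_Q=\chi$. Conversely, composing a multiplicative character of $G(Q)$ with $\varphi_Q$ gives a quandle morphism, because $\varphi_Q$ is a quandle morphism and group morphisms are morphisms of conjugacy quandles. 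The two assignments are mutually inverse: uniqueness in the universal property gives one direction, and $\varphi_Q(Q)$ generating $G(Q)$ gives the other. This is the one-dimensional case of the first proposition of this section.

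For 2), suppose first that $\chi$ is a character. Then $\chi(L_x(y))=\chi(y)$ for every $x,y\in Q$. Because $L_x$ is a bijection, we also get $\chi(L_x^{-1}(y))=\chi(L_x(L_x^{-1}(y)))=\chi(y)$. Every element of $Inn(Q)$ is a finite word in the $L_x^{\pm1}$, so induction on word length gives $\chi(g(y))=\chi(y)$ for all $g\in Inn(Q)$. Thus $\chi$ is constant on orbits. Conversely, let $\chi$ be constant on $Inn(Q)$-orbits. Since $x\triangleright y=L_x(y)$ lies in the orbit of $y$, we have $\chi(x\triangleright y)=\chi(y)$, so $\chi$ is a character.

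No step is a real obstacle. The only point needing care is handling the inverses $L_x^{-1}$ when showing invariance under all of $Inn(Q)$, which uses bijectivity of $L_x$ as above.
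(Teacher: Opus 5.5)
Your proof is correct and is essentially the paper's argument. The paper only invokes the universal property of $\varphi_Q$ for 1), using that $Conj(\mathbb{C}^\times)$ is a trivial quandle, and treats 2) as immediate from $x\triangleright y=L_x(y)$; your handling of the inverses $L_x^{-1}$ fills in a detail the paper leaves unstated.

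One small bookkeeping slip: $\chi_{G(Q)}\circ\varphi_Q=\chi$ is just the commutativity of the universal triangle, while uniqueness (equivalently, generation of $G(Q)$ by $\varphi_Q(Q)$) is what gives $(f\circ\varphi_Q)_{G(Q)}=f$, so the two reasons you cite actually serve the same direction.
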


Denote by $\theta : Q \to Inn(Q)$ the map associating to $x\in Q$ the left translation $L_x$ defined by $L_x(y)=x\triangleright y$. $\theta$ is a quandle morphism with $Inn(Q)$ endowed with the conjugacy structure. Hence there is a unique group morphism $\theta_{G(Q)}:G(Q) \to Inn(Q)$ such that the following diagram commutes :
$$\begin{tikzcd}
G(Q) \arrow{r}{\theta_{G(Q)}} &Inn(Q)  \\ 
Q \arrow{u}{\varphi_Q} \arrow{ru}{\theta} & 
\end{tikzcd} .$$ 
We will denote by $Z_0$ the kernel of $\theta_{G(Q)}$.
\begin{proposition}{\cite{EM}}
The kernel $Z_0$ of $\theta_{G(Q)}$ is a subgroup of the center of $G(Q)$.
\end{proposition}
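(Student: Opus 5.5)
To show that $Z_0=\ker(\theta_{G(Q)})$ is central, I will show that every element of $Z_0$ commutes with each generator $\varphi_Q(y)$, $y\in Q$. These generators generate $G(Q)$, so this is enough.

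The key step is a conjugation formula in $G(Q)$:
$$g\,\varphi_Q(y)\,g^{-1}=\varphi_Q\big(\theta_{G(Q)}(g)(y)\big)\qquad\text{for all } g\in G(Q),\ y\in Q.$$
The set of $g$ satisfying this identity for every $y$ is a subgroup of $G(Q)$:
\begin{itemize}
\item[-] It is closed under products, because $\theta_{G(Q)}$ is a group morphism into $Inn(Q)$, which acts on $Q$.
\item[-] It is closed under inverses. Apply the identity for $g$ to the element $y'=\theta_{G(Q)}(g)^{-1}(y)$ to get $g\,\varphi_Q(y')\,g^{-1}=\varphi_Q(y)$, then rearrange.
\end{itemize}
So it suffices to check the identity on the generators $g=\varphi_Q(x)$. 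There it is exactly the defining relation of $G(Q)$:
$$\varphi_Q(x)\varphi_Q(y)\varphi_Q(x)^{-1}=\varphi_Q(x\triangleright y)=\varphi_Q(L_x(y))=\varphi_Q\big(\theta(x)(y)\big),$$
using the commutative diagram $\theta_{G(Q)}\circ\varphi_Q=\theta$. Strictly, the closure step needs every generator's inverse $\varphi_Q(x)^{-1}$ to lie in the subgroup as well. This follows from the inverse-closure argument above.

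Now let $z\in Z_0$. Then $\theta_{G(Q)}(z)=\mathrm{id}_Q$, and the formula gives $z\varphi_Q(y)z^{-1}=\varphi_Q(y)$ for all $y\in Q$. Hence $z$ commutes with a generating set of $G(Q)$, so $z\in Z(G(Q))$.

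The only point needing care is the inverse-closure step in the subgroup argument. It is a routine substitution that uses the bijectivity of each element of $Inn(Q)$. I expect no real obstacle.
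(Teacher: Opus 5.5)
Your proof is correct: the equivariance identity $g\,\varphi_Q(y)\,g^{-1}=\varphi_Q(\theta_{G(Q)}(g)(y))$ is checked on the generators via the defining relations, extended to all of $G(Q)$ through a subgroup closed under products and inverses, and then specialized to $\theta_{G(Q)}(g)=\mathrm{id}_Q$, where it shows that $g$ commutes with the generating set $\varphi_Q(Q)$. The paper gives no argument of its own and only cites Eisermann, and your equivariance argument is the standard proof of that cited fact.
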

\begin{remark}
If the map $\varphi_Q :Q\to G(Q)$ is injective then $Z_0$ is exactly the center of $G(Q)$ $($\cite{dar}$)$.
\end{remark}
\begin{example}\label{ex}
If $Q=Conj(G)$, $Z_0$ is the center of $G(Q)$. Indeed, $G\to G$ is quandle morphism and hence factors throught $\varphi_Q: Q \to G(Q)$. Hence $\varphi_Q$ is injective.
\end{example}
\begin{proposition}{\cite{GQ}}
For $Q$ a finite quandle, an irreducible representation of $Q$ $($respectively of $G(Q)$$)$ over $\mathbb{C}$ is finite dimensional.
\end{proposition}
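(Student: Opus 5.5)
The plan is to use the correspondence between irreducible representations of $Q$ and of $G(Q)$, and to exploit that $G(Q)$ is close to finite. Via $\rho \mapsto \rho_{G(Q)}$, irreducible representations of $Q$ correspond to irreducible representations of $G(Q)$ on the same space. So it suffices to show that an irreducible representation $\pi$ of $G(Q)$ over $\mathbb{C}$ is finite dimensional. The key structural input is the proposition of \cite{EM}: $Z_0=\ker\theta_{G(Q)}$ is central in $G(Q)$. Since $Inn(Q)$ is a subgroup of the permutations of the finite set $Q$, the quotient $G(Q)/Z_0\simeq Inn(Q)$ is finite. Thus $G(Q)$ is a central extension of a finite group.

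First I would show that each $z\in Z_0$ acts by a scalar. This is a Schur lemma for possibly infinite-dimensional irreducible representations. Since $G(Q)$ is countable (it is finitely generated), every nonzero vector $v$ generates $V$ as the span of $\pi(G(Q))v$. Hence $V$ has countable dimension, and Dixmier's countable-dimension argument applies: the commutant $\mathrm{End}_{G(Q)}(V)$ is a division algebra over $\mathbb{C}$ of countable dimension. Such a division algebra is $\mathbb{C}$, because $\mathbb{C}(t)$ has uncountable dimension, so any transcendental element is impossible. Since $\pi(z)$ commutes with $\pi(G(Q))$ by centrality, $\pi(z)=\lambda_z \mathrm{Id}$.

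Next I choose a finite set $T=\{g_1,\dots,g_n\}$ of coset representatives of $Z_0$ in $G(Q)$. Fix $v\neq 0$. Every $g\in G(Q)$ can be written $g=g_i z$ with $z\in Z_0$, so $\pi(g)v=\lambda_z\pi(g_i)v$. Therefore $\mathrm{span}\{\pi(g_i)v\}$ is a $G(Q)$-stable nonzero subspace, and by irreducibility it equals $V$. So $\dim V\le |Inn(Q)|$. This gives the claim for both $G(Q)$ and $Q$.

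The main obstacle is the Schur lemma step, since $V$ is not assumed finite dimensional and the usual eigenvalue argument fails. The countable-dimension trick handles it, using that $Q$ is finite so that $G(Q)$ is countable. I would check carefully that a commutant element $\phi$ with $\phi-\mu$ invertible for every $\mu\in\mathbb{C}$ yields uncountably many linearly independent operators $(\phi-\mu)^{-1}$. Each of these is determined by its value on $v$, because $v$ generates $V$ as a module, so they would embed in the countable-dimensional space $V$, which is a contradiction.
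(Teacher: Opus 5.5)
The paper does not prove this statement; it quotes it from \cite{GQ}, so there is no in-paper argument to compare against. Your proof is correct and self-contained.

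Three checks on the individual steps:
\begin{itemize}
\item You only need the easy direction of the correspondence $\rho\mapsto\rho_{G(Q)}$. A $G(Q)$-stable subspace is $\rho(x)$-stable for every $x$, so quandle-irreducible implies group-irreducible, and this holds with no dimension hypothesis. The converse direction would a priori need stability under $\rho(x)^{-1}$, but you never use it.
\item Dixmier's countable-dimension lemma is the right substitute for Schur's lemma. Your sketch of it is complete once you add that an algebraic element of a complex division algebra is a scalar.
\item The coset argument with the central subgroup $Z_0$ from \cite{EM} then closes the proof.
\end{itemize}

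Your route uses only ingredients already present in this paper: centrality of $Z_0$, finiteness of $Inn(Q)$, and finite generation of $G(Q)$. It also yields an explicit bound, and this can be sharpened. Once $Z_0$ acts by scalars, the image of $\mathbb{C}[G(Q)]$ in $\mathrm{End}(V)$ is spanned by the $\pi(g_i)$. Burnside's theorem then gives $(\dim V)^2\le |Inn(Q)|$, not just $\dim V\le |Inn(Q)|$.

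If you prefer to avoid the countability argument, here is an alternative for the scalar step:
\begin{itemize}
\item $\mathbb{C}[G(Q)]$ is a finitely generated module over the central subalgebra $\mathbb{C}[Z_0]$, which is a finitely generated commutative algebra because $Z_0$ is finitely generated. Hence the cyclic module $V$ is a finitely generated $\mathbb{C}[Z_0]$-module.
\item By Nakayama, some maximal ideal $\mathfrak{m}$ satisfies $\mathfrak{m}V\neq V$.
\item Centrality and simplicity force $\mathfrak{m}V=0$.
\item The Nullstellensatz then says $Z_0$ acts through $\mathbb{C}[Z_0]/\mathfrak{m}=\mathbb{C}$, i.e. by scalars.
\end{itemize}
This version also shows directly that $V$ is a module over the finite-dimensional algebra $\mathbb{C}[G(Q)]/\mathfrak{m}\mathbb{C}[G(Q)]$.
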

From now on $Q$ is a \textbf{finite} quandle and $\rho : Q \to GL(V)$ is an \textbf{irreducible} repesentation \textbf{over} $\C$. In particular $V$ is finite dimensional. All the represenatations we will consider are \textbf{over} $\mathbb{C}$. Let $s$ be a section of $\theta_{G(Q)}$. Define $\rho_{Inn(Q)}: Inn(Q) \to PGL(V)$ by :  
$$\rho_{Inn(Q)}(a)=\overline{\rho_{G(Q)}(s(a))},$$
where $\overline{\rho_{G(Q)}(s(a))}$ is the class of $\rho_{G(Q)}(s(a))$ in $PGL(V)$ for $a\in Inn(Q)$. 
\begin{proposition}\label{pro}
\item[1)] $\rho_{Inn(Q)}$ is an irreducible projective representation of $Inn(Q)$ and does not depend on the choice of the section $s$.
\item[2)] For $x\in Q$, $\rho_{Inn(Q)}(L_x)=\overline{\rho(x)}$, where $\overline{\rho(x)}$ is the class of $\rho(x)$ in $PGL(V)$.
\item[3)] Let $\rho':Q \to GL(W)$ be a quandle represenation equivalent to $\rho$. Then $\rho'_{Inn(Q)}$ is projectively equivalent to $\rho_{Inn(Q)}$. In particular, if $\rho'_{Inn(Q)}$ and $\rho_{Inn(Q)}$ are not equivalent then $\rho$ and $\rho'$ are not equivalent.
\end{proposition}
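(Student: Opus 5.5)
The key input is Schur's lemma applied to the central subgroup $Z_0=\ker\theta_{G(Q)}$. Since $Z_0$ lies in the center of $G(Q)$ (by \cite{EM}) and $\rho_{G(Q)}$ is an irreducible finite dimensional complex representation, every $\rho_{G(Q)}(z)$ with $z\in Z_0$ commutes with all of $\rho_{G(Q)}(G(Q))$. It is therefore a scalar, $\rho_{G(Q)}(z)=\lambda(z)Id$. All three parts follow from this fact; no cohomology is needed yet.

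For 1), let $s$ and $s'$ be two sections of $\theta_{G(Q)}$. For $a\in Inn(Q)$ the element $s(a)^{-1}s'(a)$ lies in $Z_0$, so $\rho_{G(Q)}(s'(a))$ is a scalar multiple of $\rho_{G(Q)}(s(a))$. Their classes in $PGL(V)$ agree, so $\rho_{Inn(Q)}$ does not depend on $s$.

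\emph{Homomorphism.} We have $s(ab)^{-1}s(a)s(b)\in Z_0$. Hence $\rho_{G(Q)}(s(ab))$ and $\rho_{G(Q)}(s(a))\rho_{G(Q)}(s(b))$ differ by a scalar, and $\rho_{Inn(Q)}$ is a morphism $Inn(Q)\to PGL(V)$.

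\emph{Irreducibility.} Suppose $P(W)$ is a projective subspace stable under $\rho_{Inn(Q)}$. Then $W$ is stable under every $\rho_{G(Q)}(s(a))$. Every $g\in G(Q)$ can be written $g=s(\theta_{G(Q)}(g))z$ with $z\in Z_0$, and $z$ acts by a scalar, so $W$ is $G(Q)$-stable. Irreducibility of $\rho_{G(Q)}$, which follows from the proposition on irreducible representations of $G(Q)$ stated above, gives $W=0$ or $W=V$.

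For 2), I use independence of the section. Choose $s$ with $s(L_x)=\varphi_Q(x)$; this is possible because $\theta_{G(Q)}(\varphi_Q(x))=L_x$, so I can fix $s$ at $L_x$ first. If several $x$ give the same $L_x$, I use the scalar argument again: whenever $L_x=L_y$, the element $\varphi_Q(x)\varphi_Q(y)^{-1}$ lies in $Z_0$, so $\rho(x)$ and $\rho(y)$ have the same class. Hence $\rho_{Inn(Q)}(L_x)=\overline{\rho_{G(Q)}(\varphi_Q(x))}=\overline{\rho(x)}$.

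For 3), let $T:V\to W$ be an invertible intertwiner, so $\rho'(x)=T\rho(x)T^{-1}$ for all $x\in Q$. By uniqueness in the universal property, $\rho'_{G(Q)}=T\rho_{G(Q)}(\cdot)T^{-1}$, since both sides are group morphisms extending $\rho'$. Using the same section $s$ for both representations gives $\rho'_{Inn(Q)}=\overline{T}\rho_{Inn(Q)}\overline{T}^{-1}$. The "in particular" statement is the contrapositive.

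The only step that needs real care is the scalar claim. It uses finite dimensionality, which holds by \cite{GQ}, and the centrality of $Z_0$. Once that is in place, the rest is bookkeeping.
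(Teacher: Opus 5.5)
Your proof is correct and follows the paper's argument: Schur's lemma for the central subgroup $Z_0$ gives 1), and choosing a section with $s(L_x)=\varphi_Q(x)$ gives 2). The one small difference is in 3), where you conjugate $\rho_{G(Q)}$ by $T$ using uniqueness in the universal property, whereas the paper derives 3) from 2) and the fact that the translations $L_x$ generate $Inn(Q)$; both are one-line arguments.
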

\begin{proof}
$1)$ follows from the fact that $Z_0$ lies in the center of $G(Q)$ and hence acts by scalars in the representation $V$. To obtain $2)$ for a given $x$, we can take $s$ such that $s(L_x)=\varphi_Q(x)$. We can use $2)$ to prove $3)$, since translations generates $Inn(Q)$.
\end{proof}
We define the product $\chi \cdot \rho$ of $\rho$ by a character $\chi: Q\to Conj(\mathbb{C}^\times)$ by the following : 
$$ \chi \cdot \rho (x)=\chi(x) \rho(x),$$
for $x \in Q$. As one can check $\chi \cdot \rho$ is an irreducible representation of $Q$. Moreover it follows from $2)$ of the previous proposition and the fact that left translations generates $Inn(Q)$ by definition that :  $$(\chi \cdot \rho)_{Inn(Q)}=\rho_{Inn(Q)}.$$
\begin{proposition}\label{pro1}
\item[1)] Let $\rho':Q\to GL(V)$ be a representation such that $\rho'_{Inn(Q)}=\rho_{Inn(Q)}$. We have that $\rho'=\chi \cdot \rho$ for some character $\chi$ of $Q$.
\item[2)] Assume that $\rho_{Inn(Q)}$ lifts to a linear represenation $\tilde{\rho}_{Inn(Q)}:Inn(Q)\to GL(V)$, then $\tilde{\rho}_{Inn(Q)}\circ\theta$ is an irreducible represenation of $Q$ and $$\rho=\chi \cdot (\tilde{\rho}_{Inn(Q)}\circ\theta),$$ for some character $\chi$ of $Q$.
\end{proposition}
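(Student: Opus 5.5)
For part 1), the plan is to compare $\rho$ and $\rho'$ pointwise on $Q$ using part 2) of Proposition \ref{pro}. For each $x\in Q$ we have $\overline{\rho'(x)}=\rho'_{Inn(Q)}(L_x)=\rho_{Inn(Q)}(L_x)=\overline{\rho(x)}$ in $PGL(V)$. Hence there is a unique scalar $\chi(x)\in\mathbb{C}^\times$ with $\rho'(x)=\chi(x)\rho(x)$, which gives a map $\chi:Q\to\mathbb{C}^\times$ with $\rho'=\chi\cdot\rho$ as maps. It then remains to check that $\chi$ is a character, i.e. $\chi(x\triangleright y)=\chi(y)$.

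To see this, write out the quandle relation for both representations:
$$\chi(x\triangleright y)\rho(x\triangleright y)=\rho'(x\triangleright y)=\rho'(x)\rho'(y)\rho'(x)^{-1}=\chi(y)\,\rho(x)\rho(y)\rho(x)^{-1}=\chi(y)\rho(x\triangleright y).$$
The scalars $\chi(x)$ cancel in the conjugation, and $\rho(x\triangleright y)$ is invertible. Therefore $\chi(x\triangleright y)=\chi(y)$, so $\chi$ is a character of $Q$. This step is routine; the only point needing care is that the scalar attached to each $x$ is well defined, which holds because $\rho(x)\neq 0$.

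For part 2), set $\rho''=\tilde{\rho}_{Inn(Q)}\circ\theta$. This is a quandle representation, because $\theta$ is a quandle morphism to $Conj(Inn(Q))$ and the group morphism $\tilde\rho_{Inn(Q)}$ is a quandle morphism between conjugacy quandles. By part 2) of Proposition \ref{pro}, the class of $\rho''(x)=\tilde\rho_{Inn(Q)}(L_x)$ in $PGL(V)$ equals $\rho_{Inn(Q)}(L_x)=\overline{\rho(x)}$.

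Irreducibility of $\rho''$ is the one step needing an argument. Since $\rho''(x)$ and $\rho(x)$ differ by a nonzero scalar, a subspace $W\subset V$ is stable under all $\rho''(x)$ if and only if it is stable under all $\rho(x)$. Because $\rho$ is irreducible, so is $\rho''$.

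Next we identify $\rho''_{Inn(Q)}$ with $\rho_{Inn(Q)}$. On each generator $L_x$, part 2) of Proposition \ref{pro} applied to $\rho''$ gives $\rho''_{Inn(Q)}(L_x)=\overline{\rho''(x)}=\overline{\rho(x)}=\rho_{Inn(Q)}(L_x)$. Both maps are group morphisms $Inn(Q)\to PGL(V)$, and the $L_x$ generate $Inn(Q)$, so $\rho''_{Inn(Q)}=\rho_{Inn(Q)}$. Finally, part 1) applied to the pair $(\rho'',\rho)$ gives $\rho=\chi\cdot\rho''$ for some character $\chi$. Alternatively, the scalars can be compared directly exactly as in part 1), which avoids the generation argument.
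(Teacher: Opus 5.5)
Your proof is correct and follows the paper's argument: part 1) is the same pointwise-scalar computation showing $\chi(x\triangleright y)=\chi(y)$, and in part 2) you likewise identify $(\tilde\rho_{Inn(Q)}\circ\theta)_{Inn(Q)}$ with $\rho_{Inn(Q)}$ on the generators $L_x$ and then invoke part 1). The differences are minor. For irreducibility, the paper argues that $\tilde\rho_{Inn(Q)}$ lifts an irreducible projective representation and that $\theta(Q)$ generates $Inn(Q)$, whereas you use $\tilde\rho_{Inn(Q)}(L_x)\in\mathbb{C}^\times\rho(x)$ directly. You also apply part 1) with the roles of the two representations swapped, where the paper inverts the character.
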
 
\begin{proof}
We prove $1)$. If $\rho'_{Inn(Q)}=\rho_{Inn(Q)}$ then for $x,y \in Q$, we have :
$$ \rho'(x)=\chi(x)\rho(x), \quad \rho'(y)=\chi(y) \rho(y) \quad\text{ and } \rho'(x\triangleright y)=\chi(x\triangleright y) \rho(x\triangleright y),$$
for some $\chi(x),\chi(y),\chi(x\triangleright y)\in \mathbb{C}^\times$. Now :
$$\rho'(x\triangleright y)=\rho'(x)\rho'(y)\rho'(x)^{-1}=\chi(y)\rho(x)\rho(y)\rho(x)^{-1}=\chi(y) \rho(x\triangleright y).$$
This proves that $\chi(x\triangleright y)\rho(x\triangleright y)=\chi(y)\rho(x\triangleright y)$ and $1)$ follows.\\We prove $2)$. The representation $\tilde{\rho}_{Inn(Q)}$ is irreducible since it lifts an irreducible projective representation. The quandle representation $\tilde{\rho}_{Inn(Q)}\circ \theta$ is hence irreducible since the image of $\theta$ generates $Inn(Q)$. By $2)$ of the previous proposition, we have for $x$ in $Q$ :
$$(\tilde{\rho}_{Inn(Q)}\circ\theta)_{Inn(Q)}(L_x)=\overline{(\tilde{\rho}_{Inn(Q)}\circ\theta)(x)}=\overline{\tilde{\rho}_{Inn(Q)}(L_x)}=\rho_{Inn(Q)}(L_x),$$
where the overline is for the classe in $PGL(V)$. This proves that $(\tilde{\rho}_{Inn(Q)}\circ\theta)_{Inn(Q)}=\rho_{Inn(Q)}$ since left translation generate $Inn(Q)$. We can therefore apply $1)$ of this proposition and obtain $2)$ since the inverse of a character is a character.
\end{proof}
\begin{theorem}
Let $Q$ be a finite quandle. Assume that $Inn(Q)$ has trivial Shcur multiplier. The irreducible representations of $Q$ are the representations $\chi \cdot (\rho'\circ \theta),$ where $\rho'$ is an irreducible linear represenation of the group $Inn(Q)$ and $\chi$ is a character of $Q$.
\end{theorem}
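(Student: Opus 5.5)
The theorem asserts an equality of two sets of representations, so I will prove the two inclusions separately. Both directions are essentially assembled from Proposition \ref{pro} and Proposition \ref{pro1}. The only new input is the vanishing of $H^2(Inn(Q),\mathbb{C}^\times)$, which guarantees that every projective representation of $Inn(Q)$ lifts to a linear one.

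\emph{Every irreducible representation has the stated form.} Let $\rho: Q\to GL(V)$ be an irreducible representation of $Q$. Since $Q$ is finite, $V$ is finite dimensional, and by 1) of Proposition \ref{pro} the map $\rho_{Inn(Q)}$ is a well-defined irreducible projective representation of $Inn(Q)$. Its class $[\rho_{Inn(Q)}]$ lies in $H^2(Inn(Q),\mathbb{C}^\times)$, which is trivial by hypothesis. By the lifting criterion stated earlier for projective representations, $\rho_{Inn(Q)}$ therefore lifts to a linear representation $\tilde{\rho}_{Inn(Q)}: Inn(Q)\to GL(V)$. This lift is irreducible, because any invariant subspace would give an invariant projective subspace for $\rho_{Inn(Q)}$. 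Now 2) of Proposition \ref{pro1} applies directly and gives $\rho=\chi\cdot(\tilde{\rho}_{Inn(Q)}\circ\theta)$ for some character $\chi$ of $Q$. Setting $\rho'=\tilde{\rho}_{Inn(Q)}$ yields the required form.

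\emph{Every representation of the stated form is irreducible.} Let $\rho'$ be an irreducible linear representation of $Inn(Q)$ and $\chi$ a character of $Q$. Since $\theta: Q\to Conj(Inn(Q))$ is a quandle morphism, $\rho'\circ\theta$ is a quandle representation. Its image $\{\rho'(L_x)\}_{x\in Q}$ generates $\rho'(Inn(Q))$, because the $L_x$ generate $Inn(Q)$ by definition. Hence a subspace stable under all $\rho'(L_x)$ is stable under all of $Inn(Q)$, and so $\rho'\circ\theta$ is irreducible. Multiplying by $\chi$ rescales each operator by a nonzero scalar, so it does not change the invariant subspaces, and $\chi\cdot(\rho'\circ\theta)$ is irreducible as well; the paper already notes this fact.

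I do not expect a genuine obstacle. The only point requiring care is the use of the lifting criterion in the first direction: Proposition \ref{pro} must be used to ensure that $\rho_{Inn(Q)}$ is a genuine projective representation, independent of the section $s$ and well defined on the finite group $Inn(Q)$, before its cohomology class can be considered. Everything else follows directly from results already in the paper.
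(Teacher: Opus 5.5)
Your proof is correct and follows the paper's argument essentially verbatim. You get irreducibility of $\chi\cdot(\rho'\circ\theta)$ from the fact that the $L_x$ generate $Inn(Q)$, and for the converse you lift $\rho_{Inn(Q)}$ using the vanishing Schur multiplier and then invoke 2) of Proposition \ref{pro1}; your separate check that the lift is irreducible is already contained in that proposition's proof, so it is harmless but redundant.
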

\begin{proof}
Let $\rho'$ be an irreducible representation of $Inn(Q)$. The quandle representation $\rho'\circ \theta$ is irreducible since the image of $\theta$ generates the group $Inn(Q)$. It follows that $\chi \cdot (\rho'\circ \theta) $ is irreducible. Now let $\rho$ be an irreducible representation of $Q$. Since we assume that $Inn(Q)$ has trivial Schur multiplier then $\rho_{Inn(Q)}$ lifts to a linear representation $\tilde{\rho}_{Inn(Q)}$. The theorem hence follows from $2)$ of the previous proposition
\end{proof}
\begin{remark}
Considering the determinant one shows that the representations $\chi \cdot (\rho'\circ \theta)$ of the therorem take values in the unitary group if and only if $\rho'$ is unitary and $\chi$ take values in complex numbers with modulus $1$.
\end{remark}
\begin{example}
Let $G$ be one of the groups $D_{2n}$ $($dihedral group with $2n$ elements$)$,  $D_{4n}$ or $Q_{4n}$ $($the generalised quaternion group with $4n$ elements$)$ for $n$ odd. We have $Inn(Conj(G))=Inn(G)\simeq D_{2n}$. The group $D_{2n}$ for $n$ odd has trivial Schur multiplier and hence the irreducible quandle representations of $Conj(G)$ can be constructed out of the irreducible linear representations of the group $D_{2n}$.  
\end{example}

Let $G$ be a finite group and set $Q=Conj(G)$. One has a unique group morphism $\pi : G(Q)\to G$ extending the identity $Conj(G)\to Conj(G)$. The morphism $\theta_{G(Q)}:G(Q)\to Inn(Q)$ factors throught $\pi$. Therefore, $\pi$ is a central extension and an irreducible quandle representation of $Q$ induces an irreducible projective representation of $G$. Mimicking what we have done for the extension $G(Q)\to Inn(Q)$, we can prove that :
\begin{theorem}
 For $G$ a finite group, if $H^2(G,\mathbb{C}^\times)$ is trivial, then the irreducible quandle represenations of $Conj(G)$ are the product of quandle characters of $Conj(G)$ by irreducible linear group representations of $G$.
\end{theorem}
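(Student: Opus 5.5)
The plan is to mimic the argument for $Inn(Q)$, with the central extension $\pi : G(Q)\to G$ in place of $\theta_{G(Q)}$. Here $Q=Conj(G)$, and $\pi$ is the unique group morphism extending the identity $Q\to G$.

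\textbf{Step 1: the kernel of $\pi$ is central.} Since $\theta_{G(Q)}$ factors as $G(Q)\overset{\pi}{\to} G\to Inn(G)$, we have $ker(\pi)\subset Z_0$. By the proposition of \cite{EM}, $Z_0$ is central in $G(Q)$, so $ker(\pi)$ is central. Moreover $\pi$ is surjective, since its image contains $G$.

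\textbf{Step 2: the projective representation of $G$.} Take an irreducible representation $\rho$ of $Q$. Its extension $\rho_{G(Q)}$ is an irreducible, finite dimensional representation of $G(Q)$ (by \cite{GQ}). Schur's lemma then makes the central subgroup $ker(\pi)$ act by scalars. Choose a set-theoretic section $s$ of $\pi$ and define
$$\rho_G(g)=\overline{\rho_{G(Q)}(s(g))}\in PGL(V).$$
Exactly as in Proposition \ref{pro}, this is an irreducible projective representation of $G$, independent of $s$, and $\rho_G(x)=\overline{\rho(x)}$ for $x\in Q=G$. For the last point, choose $s$ with $s(x)=\varphi_Q(x)$ and use $\pi\circ\varphi_Q=id$.

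\textbf{Step 3: lift and compare.} Since $H^2(G,\mathbb{C}^\times)$ is trivial, $\rho_G$ lifts to a linear representation $\tilde\rho:G\to GL(V)$. This lift is irreducible because it lifts an irreducible projective representation. Viewed as a quandle map $Conj(G)\to Conj(GL(V))$, $\tilde\rho$ is a quandle representation. It is irreducible as a quandle representation because its image is the same set of operators as the image of $G$.

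For every $x\in Q$ we have $\overline{\tilde\rho(x)}=\rho_G(x)=\overline{\rho(x)}$. Hence $\rho(x)=\chi(x)\tilde\rho(x)$ for some scalar $\chi(x)\in\mathbb{C}^\times$. The computation in the proof of Proposition \ref{pro1}~1) applies verbatim: using $\rho(x\triangleright y)=\rho(x)\rho(y)\rho(x)^{-1}$ and the same relation for $\tilde\rho$, one gets $\chi(x\triangleright y)=\chi(y)$. Note that this computation only needs the pointwise projective equality, not equality of the induced $Inn(Q)$ representations. So $\chi$ is a quandle character of $Conj(G)$, and $\rho=\chi\cdot\tilde\rho$.

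\textbf{Step 4: the converse.} For any irreducible linear representation $\rho'$ of $G$ and any character $\chi$ of $Conj(G)$, the map $\rho'$ is an irreducible quandle representation. Multiplying by scalars does not change stable subspaces, so $\chi\cdot\rho'$ is also irreducible.

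The main obstacle is Step 2: checking that $\rho_G$ is well defined and is a homomorphism into $PGL(V)$. This reduces to the centrality of $ker(\pi)$ obtained in Step 1 together with Schur's lemma for the finite dimensional irreducible $\rho_{G(Q)}$. Everything else transfers directly from the $Inn(Q)$ case.
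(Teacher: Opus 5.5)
Your proposal is correct and follows the paper's approach. You replace $\theta_{G(Q)}$ by the central extension $\pi : G(Q)\to G$, which is central because $\ker\pi\subset Z_0$. You then obtain an irreducible projective representation of $G$, lift it using $H^2(G,\mathbb{C}^\times)=0$, and compare with $\rho$ pointwise as in Proposition \ref{pro1}. In doing so you spell out the details that the paper compresses into ``mimicking what we have done for the extension $G(Q)\to Inn(Q)$''.
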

\begin{example}
The generalised quaternion group $Q_{4n}$ has trivial Schur multiplier.
\end{example}
\begin{theorem}
Let $G$ be a finite group who is a Schur cover of some other group. The irreducible quandle representations of $Conj(G)$ are the representations $\chi\cdot \rho'$, where $\rho'$ is an irreducible linear representation of the group $G$ and $\chi$ is a charachter of $Conj(G)$.
\end{theorem}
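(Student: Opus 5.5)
The plan is to reduce to the Schur cover property stated earlier, by showing that the projective representation of $G$ induced by an irreducible quandle representation of $Q=Conj(G)$ factors through the group $H$ that $G$ covers. Fix a morphism $\theta' : G\to H$ such that $1\to A\to G\overset{\theta'}{\to} H\to 1$ is a stem extension with $A=ker(\theta')\simeq H^2(H,\mathbb{C}^\times)$.

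\emph{Easy direction.} Let $\rho'$ be an irreducible linear representation of $G$. Then $\rho'$, viewed as a map $Conj(G)\to Conj(GL(V))$, is a quandle representation. It is irreducible because its image $\rho'(G)$ is the image of the whole group, so quandle-stable subspaces and $G$-stable subspaces coincide. Multiplying by a character $\chi$ preserves irreducibility, as noted before Proposition~\ref{pro1}.

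\emph{Step 1: the induced projective representation of $G$.} Let $\rho:Q\to GL(V)$ be irreducible and let $\pi:G(Q)\to G$ be the morphism extending the identity. By Example~\ref{ex}, $ker(\pi)$ lies in $Z_0$, which is the center of $G(Q)$. Hence $\rho_{G(Q)}$, which is irreducible and finite dimensional, sends $ker(\pi)$ to scalars by Schur's lemma. Exactly as in Proposition~\ref{pro}, we get an irreducible projective representation $\rho_G:G\to PGL(V)$, independent of the choice of section, with $\rho_G(g)=\overline{\rho(g)}$ for every $g\in G=Q$.

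\emph{Step 2: $\rho_G$ factors through $H$.} This is the step that needs the most care. Take $a\in A\subset Z(G)$. Then $a\triangleright y=aya^{-1}=y$ for all $y\in Q$, so the relations of $G(Q)$ give $\varphi_Q(a)\varphi_Q(y)\varphi_Q(a)^{-1}=\varphi_Q(y)$. Thus $\varphi_Q(a)$ commutes with all generators and is central in $G(Q)$. By Schur's lemma $\rho(a)$ is a scalar, so $\rho_G(a)=\overline{\rho(a)}=1$. Therefore $\rho_G$ factors as $\rho_H\circ\theta'$ for a projective representation $\rho_H$ of $H$. It is irreducible, since $\theta'$ is surjective and $\rho_H$ has the same image as $\rho_G$.

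\emph{Step 3: lifting and comparison.} By the Schur cover proposition, $\rho_H\circ\theta'=\rho_G$ lifts to a linear representation $\tilde\rho:G\to GL(V)$. This lift is irreducible because it lifts an irreducible projective representation. For each $x\in Q$, $\overline{\tilde\rho(x)}=\rho_G(x)=\overline{\rho(x)}$, so $\rho(x)=\chi(x)\tilde\rho(x)$ for some scalar $\chi(x)\in\mathbb{C}^\times$.

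The computation in the proof of Proposition~\ref{pro1}~1) uses only this pointwise projective agreement together with the quandle relation. It gives $\chi(x\triangleright y)=\chi(y)$, so $\chi$ is a character of $Conj(G)$ and $\rho=\chi\cdot\tilde\rho$, which concludes the proof.
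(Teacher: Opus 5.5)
Your proof is correct and is essentially the paper's argument. Your $\rho_G$ is just $\rho_{Inn(Q)}\circ\theta$, and checking that $A\subseteq Z(G)$ acts by scalars plays the same role as the paper's factorization $\theta=\beta\circ\alpha$ through $H$; both proofs then lift via the Schur cover property and finish with the computation of Proposition~\ref{pro1}~1), with the cosmetic difference that you use $\pi:G(Q)\to G$ and compare $\rho$ with $\tilde\rho$ pointwise rather than through equality of the induced projective representations of $Inn(Q)$.
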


\begin{proof}
It is not hard to prove that the representations in the therorem are irreducible. We need to prove that every irreducible representation is of that form. To simplify the notations we will use $Inn(Q)$ instead of $Inn(Conj(G))$ in the notations. Assume $G$ is a Schur cover of $H$. In particular, $G$ is a central extension of $H$. Since $Inn(Q)=Inn(G)\simeq G/Z(G)$, where $Z(G)$ is the center of $G$, we have a commutative diagramm : 
$$\begin{tikzcd}
G \arrow{d}{\alpha}\arrow{r}{\theta} & Inn(Q)  \\ 
H  \arrow{ru}{\beta} & 
\end{tikzcd} .$$ 
Let $\rho$ be an irreducible representation of $Conj(G)$ over a vector space $V$. $\rho_{Inn(Q)}\circ \beta$ is an irreducible projective representation of $H$. Since $G$ is a Schur cover of $H$, there is a linear irreducible group representation $\rho':G\to GL(V)$ such that for $g\in G$:
$$\overline{\rho'(g)}=(\rho_{Inn(Q)}\circ\beta)\circ\alpha = \rho_{Inn(Q)}\circ \theta (g).$$
Combining the last equation with $2)$ of proposition \ref{pro}, we get for $g\in G$ :
$$\rho'_{Inn(Q)}(L_g)=\overline{\rho'(g)}=\rho_{Inn(Q)}\circ\theta(g)=\rho_{Inn(Q)}(L_g).$$
Hence, $\rho'_{Inn(Q)}=\rho_{Inn(Q)}$. It follows by $1)$ of proposition \ref{pro1} that $\rho=\chi \cdot \rho'$ for some character $\chi $ of $Conj(G)$. This proves the theorem.
\end{proof}
\begin{remark}
Considering the determinant one shows that the representations $\chi \cdot \rho'$ of the therorem take values in the unitary group if and only if $\rho'$ is unitary and $\chi$ takes values in complex numbers with modulus $1$.
\end{remark}
\begin{example}
The groups $Q_{4n}$ and $D_{4n}$ for $n$ even are Schur covers of $D_{2n}$. Hence, the irreducible quandle representation of $Conj(Q_{4n})$ $($resp. $Conj(D_{4n})$$)$ for $n$ even can be constructed out of the irreducible representations of $Q_{4n}$ $($resp. $D_{4n}$$)$.
\end{example}
\begin{remark}\label{rem}
We have associated to an irreducible quandle represenation a projective representation of the inner automorphism group. The proof of the last theorem shows that under the assumption of the theorem all projective irreducible representations of the inner automorphism group lift to quandle representations. We also note that in the case of the previous exemple the inner automorphism group $D_ {2n}$ $($$n$ even$)$ admits projective irreducibile representations that can't be lifted to linear representations of $D_{2n}$. Hence by $3)$ of proposition \ref{pro} we have projective irreducible representations of $D_{2n}$ giving quandle representation that can't be equivalent to those inducing linearisble representations of $D_{2n}$.
\end{remark}
\section{Finite quotients of envoloping groups}
In this section $Q$ is a \textbf{finite} quandle and representations are representations \textbf{over} $\mathbb{C}$. We have seen that we have a commutative diagramm : 
 
$$\begin{tikzcd}
G(Q) \arrow{r}{\theta_{G(Q)}} &Inn(Q)  \\ 
Q \arrow{u}{\varphi_Q} \arrow{ru}{\theta} & 
\end{tikzcd} ,$$ 
where $\theta$ is a quandle morphism that maps an element of $Q$ to its corresponding left traslation. We have also seen that the kernel of $\theta_{G(Q)}$ lies in the center of $G(Q)$ and we have denoted it $Z_0$. Since $Q$ is finite the group $G(Q)$ is finitely generated. $Z_0$ is a finite index subgroup. Hence $Z_0$ is finitely generated. Since $Z_0$ is abelian, we have an isomorphism :
$$\alpha : Tor(Z_0)\oplus Z_0/Tor(Z_0)\to Z_0,$$
where $Tor(Z_0)$ is the torsion subgroup (finite) of $Z_0$. The following proposition is well known :
\begin{proposition}\label{ab}
\item[1)] The abelianisation $G(Q)^{ab}$ of $G(Q)$ is isomorphic to the free abelian group over the set of orbits of $Q$ under the action of $Inn(Q)$. 
\item[2)] The abelianisation map with the identification in $1)$ maps a generator of $G(Q)$ into its class in $Q/Inn(Q)$.
\end{proposition}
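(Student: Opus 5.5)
The plan is to compute $G(Q)^{ab}$ directly from the presentation of $G(Q)$. Write $F$ for the free group on the set $Q$. Then $G(Q)=F/N$, where $N$ is the normal closure of the relators $xyx^{-1}(x\triangleright y)^{-1}$ for $x,y\in Q$. The abelianisation is therefore the free abelian group $\mathbb{Z}^{(Q)}$ on $Q$ modulo the images of these relators. In additive notation the image of the relator for $(x,y)$ is $y-(x\triangleright y)$. So
$$G(Q)^{ab}\simeq \mathbb{Z}^{(Q)}/\langle y-(x\triangleright y) \mid x,y\in Q\rangle.$$

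The key step is to identify this quotient with $\mathbb{Z}^{(Q/Inn(Q))}$. Let $p:Q\to Q/Inn(Q)$ be the projection to orbits. It extends to a group morphism $\mathbb{Z}^{(Q)}\to \mathbb{Z}^{(Q/Inn(Q))}$, and this morphism kills every $y-(x\triangleright y)$, since $x\triangleright y=L_x(y)$ lies in the orbit of $y$. It therefore descends to a surjection
$$\bar p: G(Q)^{ab}\to \mathbb{Z}^{(Q/Inn(Q))}.$$
For the inverse, I will show that the map sending $y$ to its class $[y]$ in $G(Q)^{ab}$ is constant on orbits. Every element of $Inn(Q)$ is a product of maps $L_x^{\pm1}$, so it suffices to check invariance under a single $L_x$ and a single $L_x^{-1}$.
\begin{itemize}
\item For $L_x$, we have $[x\triangleright y]=[y]$ directly from the relation.
\item For $L_x^{-1}$, set $z=L_x^{-1}(y)$, so that $x\triangleright z=y$. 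The relation then gives $[y]=[z]$.
\end{itemize}
Hence $y\mapsto[y]$ factors through $Q/Inn(Q)$ and induces a morphism $\mathbb{Z}^{(Q/Inn(Q))}\to G(Q)^{ab}$. This morphism is inverse to $\bar p$: the composite in each direction fixes the generators, on both sides.

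Part 2) then follows from the construction. Under the identification just built, the abelianisation map sends the generator $\varphi_Q(x)$ to $[x]$, and $\bar p([x])=p(x)$, which is the orbit of $x$.

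There is no real obstacle here. The only point needing care is the passage from invariance under the generators $L_x$ to invariance under all of $Inn(Q)$, which also requires handling the inverses $L_x^{-1}$. This is covered by the bijectivity of $L_x$, which comes from the third quandle axiom.
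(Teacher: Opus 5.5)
Your proof is correct and complete. The image of each relator in $\mathbb{Z}^{(Q)}$ is indeed $y-(x\triangleright y)$, your treatment of $L_x^{-1}$ via the third axiom gives invariance under all of $Inn(Q)$, and the two maps are mutually inverse on generators.

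The paper does not prove this proposition at all; it only calls it "well known". Your computation from the presentation is the standard argument that fills this gap, and it does not use finiteness of $Q$.

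An equivalent route, closer to the paper's Section 2, goes through the universal property. For an abelian group $A$, group morphisms $G(Q)\to A$ correspond to maps $f:Q\to A$ with $f(x\triangleright y)=f(y)$. These are exactly the maps constant on $Inn(Q)$-orbits, that is, morphisms $\mathbb{Z}^{(Q/Inn(Q))}\to A$. The paper's proposition on characters is the case $A=\mathbb{C}^\times$. This route hides the relator bookkeeping but relies on the same orbit-invariance step you carried out explicitly.
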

\begin{proposition}\label{ab1}
\item[1)] The group $Tor(Z_0)$ is a subgroup of the derived group of $G(Q)$.
\item[2)] The natural map from $\alpha(Z_0/Tor(Z_0))$ into the abelianisation of $G(Q)$ is injective.
\end{proposition}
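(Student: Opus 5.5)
Both parts will be deduced from Proposition \ref{ab}, using the abelianisation map $ab: G(Q)\to G(Q)^{ab}\simeq \mathbb{Z}^{O}$, where $O=Q/Inn(Q)$ is the (finite) set of orbits.

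\emph{Part 1).} The target $\mathbb{Z}^{O}$ is a free abelian group, so it is torsion free. Take $z\in Tor(Z_0)$ of finite order $n$. Then $ab(z)$ also satisfies $n\cdot ab(z)=0$ in $\mathbb{Z}^O$, and torsion-freeness forces $ab(z)=0$. The kernel of the abelianisation map is exactly the derived subgroup $[G(Q),G(Q)]$, so $z$ lies in the derived subgroup. This part is immediate once Proposition \ref{ab} is granted.

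\emph{Part 2).} The goal is to show that $ab$ restricted to $\alpha(Z_0/Tor(Z_0))$ is injective. Set $F=\alpha(Z_0/Tor(Z_0))$. It is a free abelian subgroup of $Z_0$ with $Z_0=Tor(Z_0)\oplus F$. The kernel of $ab|_F$ is $F\cap[G(Q),G(Q)]$, and since $F$ is torsion free it suffices to show that $Z_0\cap [G(Q),G(Q)]$ is a torsion group.

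\begin{itemize}
\item[-] Let $D=[G(Q),G(Q)]$ and take $z\in Z_0\cap D$. Because $Z_0$ has finite index in $G(Q)$ (its quotient embeds in the finite group $Inn(Q)$), the subgroup $D\cap Z_0$ is central and of finite index in $D$.
\item[-] The group $D$ is finitely generated, being a finite-index subgroup of the finitely generated group $G(Q)$. So $D$ is central-by-finite, and Schur's theorem gives that $[D,D]$ is finite.
\item[-] This alone does not suffice. A more direct argument uses the transfer $V: G(Q)\to Z_0$. Since $Z_0$ is central of finite index $m=[G(Q):Z_0]$, the transfer is the map $g\mapsto g^{m}$ and is a homomorphism into an abelian group, so it kills $D$. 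Hence $z^{m}=V(z)=1$ for every $z\in Z_0\cap D$, and $Z_0\cap D$ is torsion.
\end{itemize}

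It follows that $F\cap D\subset F\cap Tor(Z_0)=\{1\}$, which gives the injectivity in part 2).

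The main obstacle is the step using the transfer: recalling that for a central subgroup of finite index the transfer is $g\mapsto g^m$. This is a standard fact (Schur), and I would cite it rather than reprove it. If it needs justification, one computes the transfer with coset representatives $t_i$: each factor $t_i g t_{\sigma(i)}^{-1}$ lies in $Z_0$, so they commute, and the product over a cycle of $g$ acting on the cosets collapses to a conjugate of a power of $g$, i.e. to that power of $g$ itself since it lies in the center.
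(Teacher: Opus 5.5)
Your proof is correct. Part 1) is the paper's argument. Part 2) takes a different, more elementary route.

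\textbf{Part 2): the two routes.} The paper applies Schur's theorem to $G(Q)$ itself. Since $Z_0\le Z(G(Q))$ has finite index, $G(Q)$ is central-by-finite, so $D=[G(Q),G(Q)]$ is finite. Then $F\cap D$ is a finite subgroup of the torsion-free group $F$, hence trivial.

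You instead use the transfer $G(Q)\to Z_0$, $g\mapsto g^{m}$, with $m=[G(Q):Z_0]=|Inn(Q)|$. This shows that $D$ has exponent dividing $m$, so $Z_0\cap D\subseteq Tor(Z_0)$. That is exactly the first step of the standard proof of Schur's theorem, and it is all you need because $F$ is torsion free.

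The two routes buy different things:
\begin{itemize}
\item Your transfer argument skips the finite-generation part of Schur's argument and gives an explicit exponent bound on $D$.
\item The paper's route is a one-line citation and gives the stronger fact that $D$ is finite.
\end{itemize}
Combined with part 1), either argument gives $Z_0\cap D=Tor(Z_0)$.

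\textbf{A false claim in your second bullet.} You say $D$ has finite index in $G(Q)$. This is wrong, since $G(Q)/D\cong\mathbb{Z}^{O}$ is infinite. The aside is not used in your final argument, so you should simply delete it. What you were missing there is that Schur's theorem should be applied to $G(Q)$ rather than to $D$. Doing so would have given the paper's proof directly, without needing the transfer.
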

\begin{proof}
$1)$ follows from the fact that $G(Q)^{ab}$ is a free abelian group. We prove $2)$. $Z_0$ has finite index in $G(Q)$ hence the center of $G(Q)$ is of finite index. It follows form Schur's theorem for the derived group that the derived group of $G(Q)$ is finite. This proves $2)$.
\end{proof}
\begin{definition}
The representation group $G(Q)_\alpha$ of $Q$ assosiated to $\alpha$ is the quotient : $$G(Q)/ \alpha( Z_0/Tor(Z_0)).$$
\end{definition}
Denote by $q_\alpha$ the quotient map $G(Q)\to G(Q)_\alpha$. The group morphism $\theta_{G(Q)}: G(Q)\to Inn(Q)$ factors throught $q_\alpha$ and we get a commutative diagram :
$$\begin{tikzcd}
G(Q)_\alpha \arrow{rd}{\overline{\theta}_{G(Q)}} & \\
G(Q) \arrow{u}{q_\alpha}\arrow{r}{\theta_{G(Q)}} &Inn(Q)  \\ 
Q \arrow{u}{\varphi_Q} \arrow{ru}{\theta} & 
\end{tikzcd} ,$$ 

\begin{proposition}
\item[1)] The extension $\overline{\theta}_{G(Q)}: G(Q)_\alpha \to Inn(Q)$ is a stem extension with nucleus isomorphic to the finite group $Tor(Z_0)$.
\item[2)] The group $G(Q)_\alpha$ is finite.
\end{proposition}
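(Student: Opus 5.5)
Write $F=\alpha(Z_0/Tor(Z_0))$. I will read $\alpha(Tor(Z_0))$ simply as $Tor(Z_0)\subset Z_0$, so that $Z_0=Tor(Z_0)\times F$ is an internal direct product. Since $Z_0$ is central in $G(Q)$, the subgroup $F$ is central, hence normal, and $G(Q)_\alpha=G(Q)/F$ makes sense. The map $\theta_{G(Q)}$ kills $F\subset Z_0$, so it factors through $q_\alpha$. Moreover $\theta_{G(Q)}$ is surjective, because its image contains every $L_x$ and these generate $Inn(Q)$. Hence $\overline{\theta}_{G(Q)}$ is surjective.

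The first step is to identify the kernel. We have
$$\ker\overline{\theta}_{G(Q)}=Z_0/F=(Tor(Z_0)\times F)/F\simeq Tor(Z_0).$$
Concretely, $q_\alpha$ restricted to $Tor(Z_0)$ is injective because $Tor(Z_0)\cap F=1$, and its image is $q_\alpha(Z_0)$. By the Proposition of \cite{EM}, $Z_0$ is central in $G(Q)$, so its image is central in the quotient $G(Q)_\alpha$. The extension is therefore central. For the stem condition, Proposition \ref{ab1} 1) gives $Tor(Z_0)\subset [G(Q),G(Q)]$. Applying $q_\alpha$, we get $q_\alpha(Tor(Z_0))\subset q_\alpha([G(Q),G(Q)])=[G(Q)_\alpha,G(Q)_\alpha]$, since the image of a derived subgroup under a surjection is the derived subgroup of the image. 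So the nucleus lies in the derived subgroup, and 1) follows. I would also note that the nucleus $Tor(Z_0)$ is finite, since it is the torsion of a finitely generated abelian group; this was already remarked in the text.

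For 2), $G(Q)_\alpha$ is an extension of $Inn(Q)$ by $Tor(Z_0)$. The group $Inn(Q)$ is finite, being a subgroup of the permutations of the finite set $Q$, and $Tor(Z_0)$ is finite. Hence
$$|G(Q)_\alpha|=|Tor(Z_0)|\cdot|Inn(Q)|<\infty.$$

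The only delicate point is bookkeeping with $\alpha$: the decomposition must be an internal direct product in which the torsion summand is exactly $Tor(Z_0)$. This holds because any complement of the torsion part in a finitely generated abelian group is torsion-free, so it meets $Tor(Z_0)$ trivially. Everything else is formal once centrality (\cite{EM}) and Proposition \ref{ab1} 1) are invoked.
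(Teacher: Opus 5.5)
Your proof is correct and takes the same approach as the paper. The kernel $Z_0/F\simeq Tor(Z_0)$ is central because $Z_0$ is, it lies in the derived subgroup by Proposition \ref{ab1} 1) pushed forward along the surjection $q_\alpha$, and finiteness follows from $|G(Q)_\alpha|=|Tor(Z_0)|\cdot|Inn(Q)|$. You also supply details the paper's two-line proof leaves implicit: the internal decomposition $Z_0=Tor(Z_0)\times F$, and the fact that $q_\alpha$ maps derived subgroup onto derived subgroup.
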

\begin{proof}
$1)$ follows from the definition of $G(Q)_\alpha$ and $1)$ of proposition \ref{ab1}. $2)$ is a consequence of $1)$ of this proposition since $Tor(Z_0)$ and $Inn(Q)$ are both finite.
\end{proof}
For a multiplicative character $\chi$ of a group $G$ and a representation $\rho :G \to GL(V)$, we define the linear representation $\chi\cdot \rho : G \to GL(V)$ by:
$$(\chi \cdot \rho) (g)= \chi(g)\rho(g),$$
for $g\in G$. 

\begin{proposition}\label{tc}
Let $\rho : G(Q) \to GL(V)$ be an irreducible representation of the group $G(Q)$. There exists a multiplicative character $\chi$ of $G(Q)$ such that the kerenel of $\chi \cdot \rho$ contains $\alpha(Z_0/Tor(Z_0))$. In particular, the representation $\chi \cdot \rho$ induces an irreducible representation $\rho_\chi : G(Q)_\alpha \to GL(V)$ and $\rho=\chi^{-1}\cdot (\rho_\chi\circ q_\alpha)$, where $q_\alpha :G(Q)\to G(Q)_\alpha$ is the quotient map. 
\end{proposition}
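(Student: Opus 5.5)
Since $V$ is finite dimensional (by the result of \cite{GQ}) and $\rho$ is irreducible, Schur's lemma applies: every central element of $G(Q)$ acts on $V$ by a scalar. By the proposition of \cite{EM}, $Z_0$ lies in the center of $G(Q)$. Hence there is a multiplicative character $\lambda : Z_0 \to \mathbb{C}^\times$ with $\rho(z)=\lambda(z)\,Id_V$ for $z\in Z_0$. The plan is to find a multiplicative character $\chi$ of $G(Q)$ whose restriction to the free abelian subgroup $F:=\alpha(Z_0/Tor(Z_0))$ equals $\lambda^{-1}|_F$. Then $\chi\cdot\rho$ is trivial on $F$.

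The key step is extending $\lambda^{-1}|_F$ from $F$ to all of $G(Q)$. I will pass through the abelianisation. By $2)$ of Proposition \ref{ab1}, the abelianisation map $ab : G(Q)\to G(Q)^{ab}$ is injective on $F$, so $ab(F)$ is a subgroup of $G(Q)^{ab}$ isomorphic to $F$. Define $\mu := \lambda^{-1}\circ (ab|_F)^{-1}$ on $ab(F)$; it is a homomorphism $ab(F)\to\mathbb{C}^\times$. Since $\mathbb{C}^\times$ is a divisible abelian group, it is an injective $\mathbb{Z}$-module. Therefore $\mu$ extends to a homomorphism $\tilde\mu : G(Q)^{ab}\to\mathbb{C}^\times$. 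Concretely, $G(Q)^{ab}$ is free abelian of finite rank by Proposition \ref{ab}, and one can extend by choosing roots. Set $\chi := \tilde\mu\circ ab$. This is a multiplicative character of $G(Q)$, and for $z\in F$ we get $\chi(z)=\lambda(z)^{-1}$. Hence $(\chi\cdot\rho)(z)=\chi(z)\lambda(z)Id_V=Id_V$, so $F\subset \ker(\chi\cdot\rho)$.

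For the remaining claims, $\chi\cdot\rho$ factors through $q_\alpha$ and gives $\rho_\chi : G(Q)_\alpha\to GL(V)$ with $\rho_\chi\circ q_\alpha=\chi\cdot\rho$. Multiplying by $\chi^{-1}$ gives $\rho=\chi^{-1}\cdot(\rho_\chi\circ q_\alpha)$. Irreducibility is preserved at each stage. First, $\chi\cdot\rho$ has the same invariant subspaces as $\rho$, since the two differ by scalars elementwise. Second, $q_\alpha$ is surjective, so invariant subspaces for $\rho_\chi$ are exactly those for $\chi\cdot\rho$.

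The main obstacle I expect is the extension step. It relies on the injectivity of $F\to G(Q)^{ab}$ from Proposition \ref{ab1}. Without it, $\lambda^{-1}$ might fail to be compatible with the kernel of the abelianisation. Given that injectivity, divisibility of $\mathbb{C}^\times$ makes the extension routine.
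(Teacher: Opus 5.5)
Your proof is correct and follows the paper's argument. Because $Z_0$ is central, it acts through a scalar character $\lambda$; the injectivity of $\alpha(Z_0/Tor(Z_0))\to G(Q)^{ab}$ from Proposition \ref{ab1} lets you move $\lambda^{-1}$ onto a subgroup of the abelianisation, and extending it to all of $G(Q)^{ab}$ gives $\chi$, which is the paper's $\tilde{\lambda}^{-1}$. You differ only in justifying the extension by divisibility of $\mathbb{C}^\times$ rather than by $G(Q)^{ab}$ being free abelian (so freeness is not needed for this step), and you also spell out the factorization and irreducibility claims that the paper leaves implicit.
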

\begin{proof}
$\alpha(Z_0/Tor(Z_0))$ lies in the center of $G(Q)$ and hence acts by scalars in the representation. Let $\lambda : \alpha(Z_0/Tro(Z_0))\to \mathbb{C}^\times$ be the associated multiplicative character. By $2)$ of proposition \ref{ab1} the natural map $\alpha(Z_0/Tor(Z_0))\to G(Q)^{ab}$ is injective and by $1)$ of proposition \ref{ab} $G(Q)^{ab}$ is a finitely generated free abelian group. Hence, the character $\lambda$ extends to a multiplicative character $\tilde{\lambda}:G(Q)\to \mathbb{C}^\times$. We obtain the proposition by taking $\chi=\tilde{\lambda}^{-1}$.
\end{proof}
We will denote by $\varphi_{Q,\alpha}$ the composition of $\varphi_Q:Q\to G(Q)$ with the quotient map $q_\alpha:G(Q)\to G(Q)_\alpha$.
\begin{theorem}
The irreducible quandle representations of $Q$ are the representations of the form $\chi \cdot( \rho \circ \varphi_{Q,\alpha})$, where $\chi$ is a character of $Q$ and $\rho$ is an irreducible representation of the finite group $G(Q)_\alpha$.
\end{theorem}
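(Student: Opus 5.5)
The proof has two inclusions. In both directions we pass between quandle representations of $Q$ and group representations of $G(Q)$ using the correspondence $\rho\mapsto\rho_{G(Q)}$ of section 2. Under this correspondence, characters of $Q$ match multiplicative characters of $G(Q)$, and the quandle representation $\chi\cdot\rho$ matches $\chi_{G(Q)}\cdot\rho_{G(Q)}$. The last point holds because both are group morphisms and they agree on the generators $\varphi_Q(x)$.

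\emph{Representations of the stated form are irreducible.} Let $\rho$ be an irreducible representation of $G(Q)_\alpha$. Then $\rho\circ q_\alpha$ is an irreducible representation of $G(Q)$, since $q_\alpha$ is surjective. The quandle representation $\rho\circ\varphi_{Q,\alpha}=(\rho\circ q_\alpha)\circ\varphi_Q$ corresponds to $\rho\circ q_\alpha$. By $2)$ of the first proposition of section 2 it is therefore irreducible. Multiplying by a character $\chi$ of $Q$ preserves irreducibility, as already observed after Proposition~\ref{pro}: a subspace stable under all $\chi(x)\rho(x)$ is stable under all $\rho(x)$, since the factors $\chi(x)$ are nonzero scalars.

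\emph{Every irreducible representation has this form.} Let $\rho$ be an irreducible representation of $Q$. By $2)$ of the first proposition of section 2, $\rho_{G(Q)}$ is an irreducible representation of $G(Q)$, and it is finite dimensional by the result of \cite{GQ}. Proposition~\ref{tc} gives a multiplicative character $\chi$ of $G(Q)$ and an irreducible representation $\rho_\chi$ of $G(Q)_\alpha$ such that
$$\rho_{G(Q)}=\chi^{-1}\cdot(\rho_\chi\circ q_\alpha).$$
We compose both sides with $\varphi_Q$. The left side gives $\rho$. On the right, $\chi^{-1}\circ\varphi_Q$ is a character of $Q$ by $1)$ of the proposition on characters, and $\rho_\chi\circ q_\alpha\circ\varphi_Q=\rho_\chi\circ\varphi_{Q,\alpha}$. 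Hence
$$\rho=(\chi^{-1}\circ\varphi_Q)\cdot(\rho_\chi\circ\varphi_{Q,\alpha}),$$
which is of the required form.

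The only delicate point is Proposition~\ref{tc}, namely extending the central character $\lambda$ from $\alpha(Z_0/Tor(Z_0))$ to all of $G(Q)$; that has already been established. What remains is bookkeeping: checking that the correspondence between $Q$ and $G(Q)$ respects the product by characters and irreducibility in both directions.
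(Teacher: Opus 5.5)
Your proof is correct and is essentially the paper's argument. For the converse you apply Proposition~\ref{tc} to $\rho_{G(Q)}$ and compose $\rho_{G(Q)}=\chi^{-1}\cdot(\rho_\chi\circ q_\alpha)$ with $\varphi_Q$, exactly as the paper does; for irreducibility of $\chi\cdot(\rho\circ\varphi_{Q,\alpha})$ the paper only notes that the image of $\varphi_{Q,\alpha}$ generates $G(Q)_\alpha$, which your argument (surjectivity of $q_\alpha$, the $Q$--$G(Q)$ correspondence, and stability of irreducibility under twisting by a character) spells out in detail.
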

\begin{proof}
The fact that the representations of this for are irreducible follows from the fact that the image of $\varphi_{Q,\alpha}$ generates $G(Q)_\alpha$. Now, let $\rho$ be an irreducible represenation of $Q$. By the universal property of $G(Q)$ $\rho$ induces an irreducible group represetation of $\rho_{G(Q)}$ such that $\rho=\rho_{G(Q)}\circ \varphi_Q$. By the previous proposition $\rho_{G(Q)}=\lambda^{-1}\cdot (\rho'\circ q_\alpha)$ for some mutiplicative character of $G(Q)$ and an irreducible representation of $G(Q)$. Combining the last two equations we get :
$$\rho=(\lambda^{-1}\circ \varphi_Q)\cdot(\rho'\circ \varphi_{Q,\alpha}).$$
This proves that $\rho$ is of the form as in the theorem since $\lambda^{-1}\circ \varphi_Q$ is a character of $Q$. We have proved the theroem.
 \end{proof}
\begin{remark}
Considering the determinant one shows that the representations $\chi \cdot (\rho\circ \varphi_{Q,\alpha})$ of the therorem take values in the unitary group if and only if $\rho'$ is unitary and $\chi$ take values in complex numbers with modulus $1$.
\end{remark}
We have proved in the previous section that an irreducible quandle representation $\rho$ of $Q$ induces an irreducible projective representation of $\rho_{Inn(Q)}$ of $Inn(Q)$.
Proposition \ref{tc} shows that irreducible projective representations of $Inn(Q)$ that lift to linear representations of $G(Q)_\alpha$ (via $\overline{\theta}_G(Q)$) are exactly those that lift to $G(Q)$ (via $\theta_Q$). This proves that projective irreducible representations of $Inn(Q)$ that are induced by irreducible quandle representations of $Q$ are those that lift to $G(Q)_\alpha$. Now $G(Q)_\alpha$ is a stem extension of $Inn(Q)$, hence its nucleus $q_\alpha(Tor(Z_0))\simeq Tor(Z_0)$ correspond (by the inflation-restriction exact sequence with coefficients in $\mathbb{C}^\times$) to a subgroup $M(Q)_\alpha$ of the Schur multiplier $H^2(Inn(Q),\mathbb{C^\times})$. The classes defined by $M(Q)_\alpha$ are the classes of irreducible projective representations of $Inn(Q)$ that lift to linear represenations of $G(Q)_\alpha$ and hence these are the classes of irreducible representations induced by irreducible representations of $Q$. In particular, $M(Q)_\alpha\simeq Tor(Z_0)$ does not depend on $\alpha$ and allows to determine wich projective irreducible representations of $Inn(Q)$ are induced by quandle irreducible representations. Hence, we have the following :

\begin{theorem}
A finite quandle $Q$ defines a subgroup $M_Q$ of $H^2(Inn(Q),\mathbb{C}^\times)$. This subgroup is isomorphic to $Tor(Z_0)$ and correpond to the classes of irreducible projective representations of $Inn(Q)$ that are induced by irreducible quandle representations of $Q$. 
\end{theorem}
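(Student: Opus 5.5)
The plan is to define $M_Q$ as the image of the connecting map $\delta : Hom(Tor(Z_0),\mathbb{C}^\times)\to H^2(Inn(Q),\mathbb{C}^\times)$ from the inflation-restriction sequence attached to the stem extension $1\to q_\alpha(Tor(Z_0))\to G(Q)_\alpha \overset{\overline{\theta}_{G(Q)}}{\to} Inn(Q)\to 1$. The proof then has three steps: show that this image equals the set $S$ of classes $[\rho_{Inn(Q)}]$ for $\rho$ an irreducible quandle representation of $Q$; deduce from that description that it does not depend on $\alpha$; and read off the isomorphism with $Tor(Z_0)$.

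\textbf{Step 1: the subgroup and its isomorphism type.} Since the extension is stem, the sequence $0\to Hom(Tor(Z_0),\mathbb{C}^\times)\to H^2(Inn(Q),\mathbb{C}^\times)\overset{Inf}{\to} H^2(G(Q)_\alpha,\mathbb{C}^\times)$ is exact. So $\delta$ is injective and its image is $\ker Inf_{\overline{\theta}_{G(Q)}}$. Because $Tor(Z_0)$ is a finite abelian group, $Hom(Tor(Z_0),\mathbb{C}^\times)$ is its Pontryagin dual and is (noncanonically) isomorphic to $Tor(Z_0)$. Hence $\ker Inf_{\overline{\theta}_{G(Q)}}$ is a subgroup isomorphic to $Tor(Z_0)$.

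\textbf{Step 2: identify the kernel with $S$.} By the lifting proposition for $\theta$, a class $c$ lies in $\ker Inf_{\overline{\theta}_{G(Q)}}$ if and only if every projective representation of class $c$ lifts to a linear representation of $G(Q)_\alpha$.

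For the inclusion $S\subset \ker Inf$, take an irreducible quandle representation $\rho$. By Proposition \ref{tc}, after twisting by a character $\chi$, the representation $\rho_{G(Q)}$ factors as $\rho_\chi\circ q_\alpha$. Twisting by a character does not change $\rho_{Inn(Q)}$, so $\rho_{Inn(Q)}\circ\overline{\theta}_{G(Q)}$ is the class of $\rho_\chi$, which is linear. Therefore $[\rho_{Inn(Q)}]$ lies in the kernel.

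For the converse, take $c$ in the kernel. By the Schur twisted-algebra proposition, some irreducible projective representation $\sigma$ has class $c$. Since $c\in\ker Inf$, $\sigma$ lifts to a linear $\tilde\sigma$ on $G(Q)_\alpha$, and $\tilde\sigma$ is irreducible because $\sigma$ is. Set $\rho=\tilde\sigma\circ\varphi_{Q,\alpha}$; this is irreducible because the image of $\varphi_{Q,\alpha}$ generates $G(Q)_\alpha$. By 2) of Proposition \ref{pro}, $\rho_{Inn(Q)}(L_x)=\overline{\tilde\sigma(q_\alpha\varphi_Q(x))}=\sigma(L_x)$ for all $x\in Q$. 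The translations $L_x$ generate $Inn(Q)$, so $\rho_{Inn(Q)}=\sigma$ and $c\in S$.

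\textbf{Step 3: independence of $\alpha$.} Step 2 shows that $M_Q=S$, and $S$ is defined without reference to $\alpha$. So $M_Q$ depends only on $Q$.

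\textbf{Main obstacle.} The delicate point is the phrase ``correspond to the classes'', in the converse direction of Step 2. That direction needs every class in the kernel to be realized by some \emph{irreducible} projective representation, which is exactly what the cited twisted-algebra result supplies. I would also double-check that irreducibility is preserved under lifting and under composition with $\varphi_{Q,\alpha}$. The remaining steps are routine consequences of exactness and finiteness of $Tor(Z_0)$.
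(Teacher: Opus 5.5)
Your proposal is correct and follows the paper's argument. The paper likewise uses the stem extension $\overline{\theta}_{G(Q)}:G(Q)_\alpha\to Inn(Q)$ and the inflation-restriction sequence to identify $Hom(Tor(Z_0),\mathbb{C}^\times)\simeq Tor(Z_0)$ with $\ker Inf_{\overline{\theta}_{G(Q)}}$, and Proposition \ref{tc} to show this kernel is exactly the set of classes induced by irreducible quandle representations, which gives independence of $\alpha$. You additionally make explicit two points the paper leaves implicit (the paper routes the converse through lifts to $G(Q)$): every class in the kernel is realized by an irreducible projective representation, and its lift composed with $\varphi_{Q,\alpha}$ is an irreducible quandle representation inducing it.
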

\begin{corollary}\label{cor}
If $H^2(Inn(Q),\mathbb{C}^\times)=0$ then $Tor(Z_0)=0$.
\end{corollary}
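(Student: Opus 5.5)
The plan is to read the corollary off the theorem just proved, or more directly off the stem extension $\overline{\theta}_{G(Q)}: G(Q)_\alpha \to Inn(Q)$ constructed above.

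The first step uses the earlier proposition on $G(Q)_\alpha$. That proposition says $1\to q_\alpha(Tor(Z_0))\to G(Q)_\alpha \to Inn(Q)\to 1$ is a stem extension whose nucleus is isomorphic to $Tor(Z_0)$. For a stem extension with nucleus $A$, the inflation-restriction sequence recalled in the section on projective representations gives an exact sequence
$$0 \to Hom(A,\mathbb{C}^\times)\to H^2(Inn(Q),\mathbb{C}^\times).$$
It follows that $Hom(Tor(Z_0),\mathbb{C}^\times)$ embeds into $H^2(Inn(Q),\mathbb{C}^\times)$. Equivalently, the theorem above gives $M_Q\simeq Tor(Z_0)$ with $M_Q$ a subgroup of the Schur multiplier.

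The second step applies the hypothesis. If $H^2(Inn(Q),\mathbb{C}^\times)=0$, then $Hom(Tor(Z_0),\mathbb{C}^\times)=0$.

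The third step concludes. Since $Tor(Z_0)$ is a finite abelian group, its character group $Hom(Tor(Z_0),\mathbb{C}^\times)$ is isomorphic to $Tor(Z_0)$, because $\mathbb{C}^\times$ contains roots of unity of every order. So $Tor(Z_0)=0$.

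The only point that needs checking is that the stem condition really holds: the nucleus $q_\alpha(Tor(Z_0))$ must lie in the derived subgroup of $G(Q)_\alpha$. This holds because $Tor(Z_0)$ lies in the derived subgroup of $G(Q)$ by $1)$ of proposition \ref{ab1}, and $q_\alpha$ sends derived subgroup into derived subgroup. That condition is exactly what makes the map $Hom(A,\mathbb{C}^\times)\to H^2$ injective. Without it, the corollary would not follow.
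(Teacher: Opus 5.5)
Your proposal is correct and follows essentially the paper's route. The paper reads the corollary off the theorem that $M_Q\simeq Tor(Z_0)$ is a subgroup of $H^2(Inn(Q),\mathbb{C}^\times)$, and that theorem is itself obtained by applying the inflation--restriction sequence to the stem extension $\overline{\theta}_{G(Q)}: G(Q)_\alpha\to Inn(Q)$ with nucleus $q_\alpha(Tor(Z_0))\simeq Tor(Z_0)$. You go to that stem extension directly, skipping the projective-representation interpretation, and you make explicit the step the paper leaves tacit: a finite abelian group is trivial exactly when its character group $Hom(Tor(Z_0),\mathbb{C}^\times)$ is.
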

\begin{corollary}
If $H^2(Inn(Q),\mathbb{C}^\times)=0$ then $G(Q)_\alpha$ is isomorphic to $Inn(Q)$.
\end{corollary}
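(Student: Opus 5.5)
We prove the last corollary: if $H^2(Inn(Q),\mathbb{C}^\times)=0$ then $G(Q)_\alpha\simeq Inn(Q)$. The plan is to read this off the structure of $\overline{\theta}_{G(Q)}$ together with Corollary \ref{cor}.

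First, recall from the proposition preceding Proposition \ref{tc} that $\overline{\theta}_{G(Q)}: G(Q)_\alpha \to Inn(Q)$ is a stem extension whose nucleus is $q_\alpha(Tor(Z_0))\simeq Tor(Z_0)$. Its kernel is exactly this nucleus. Indeed, $\ker \theta_{G(Q)}=Z_0=\alpha(Tor(Z_0))\oplus \alpha(Z_0/Tor(Z_0))$, and we quotient by the second summand, so $\ker\overline{\theta}_{G(Q)}=Z_0/\alpha(Z_0/Tor(Z_0))\simeq Tor(Z_0)$.

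Next, $\overline{\theta}_{G(Q)}$ is surjective. This is because $\theta_{G(Q)}$ is surjective: its image contains all the translations $L_x=\theta(x)$, and these generate $Inn(Q)$ by definition.

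Finally, under the hypothesis $H^2(Inn(Q),\mathbb{C}^\times)=0$, Corollary \ref{cor} gives $Tor(Z_0)=0$. Alternatively, the exact sequence $0\to Hom(Tor(Z_0),\mathbb{C}^\times)\to H^2(Inn(Q),\mathbb{C}^\times)$ for stem extensions shows that $Hom(Tor(Z_0),\mathbb{C}^\times)=0$; since $Tor(Z_0)$ is a finite abelian group, it is isomorphic to its own character group, hence trivial. So $\overline{\theta}_{G(Q)}$ is a surjective homomorphism with trivial kernel, that is, an isomorphism $G(Q)_\alpha\simeq Inn(Q)$.

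The only point needing care is the identification of $\ker\overline{\theta}_{G(Q)}$ with $Tor(Z_0)$. This follows because $\alpha(Z_0/Tor(Z_0))\subset Z_0$ and $Z_0=\alpha(Tor(Z_0))\cdot\alpha(Z_0/Tor(Z_0))$ is a direct decomposition; note that $\alpha$ restricted to $Tor(Z_0)$ is the inclusion, or at least has image $Tor(Z_0)$, since torsion is preserved under isomorphism.
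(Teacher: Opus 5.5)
Your proof is correct and follows the paper's (implicit) route: the paper reads the corollary off from the fact that $\overline{\theta}_{G(Q)}: G(Q)_\alpha \to Inn(Q)$ is a stem extension with nucleus $Tor(Z_0)$, together with Corollary \ref{cor}, which gives $Tor(Z_0)=0$. Your explicit checks, namely surjectivity via the translations $L_x$ and the identification $\ker\overline{\theta}_{G(Q)}\simeq Tor(Z_0)$ via the torsion-preserving isomorphism $\alpha$, fill in details the paper leaves unstated.
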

\begin{proposition}
Let $G$ $($finite$)$ be a shcur Cover of another group. For $Q=Conj(G)$, $Tor(Z_0)$ is the torsion group of the center and is isomorphic to $M_Q$ and $H^2(Inn(G),\mathbb{C}^{\times})$.
\end{proposition}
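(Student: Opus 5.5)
The plan is to reduce everything to the theorem just stated and to Example \ref{ex}. First, for $Q=Conj(G)$, Example \ref{ex} tells us that $Z_0$ is exactly the center of $G(Q)$. Hence $Tor(Z_0)$ is the torsion subgroup of the center of $G(Q)$. The preceding theorem gives $M_Q\simeq Tor(Z_0)$, where $M_Q\subset H^2(Inn(Q),\mathbb{C}^\times)=H^2(Inn(G),\mathbb{C}^\times)$. So it only remains to show that $M_Q$ is the whole Schur multiplier. That is, I must show that every class $c\in H^2(Inn(G),\mathbb{C}^\times)$ is the class of an irreducible projective representation of $Inn(G)$ induced by an irreducible quandle representation of $Conj(G)$.

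Fix such a class $c$. By the proposition quoted from \cite{Sc}, there is an irreducible projective representation $\sigma: Inn(G)\to PGL(V)$ with $[\sigma]=c$. Let $\alpha:G\to H$ be the Schur cover map; it is surjective with kernel central in $G$. Since $\ker\alpha\subset Z(G)$, the projection $\theta:G\to Inn(G)\simeq G/Z(G)$ factors as $\theta=\beta\circ\alpha$ with $\beta:H\to Inn(G)$ surjective. This is the same commutative triangle used in the proof of the previous theorem on Schur covers.

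Then $\sigma\circ\beta$ is a projective representation of $H$. It is irreducible because $\beta$ is surjective, so the image group is unchanged. By the proposition on Schur covers, $\sigma\circ\beta$ lifts via $\alpha$ to an irreducible linear representation $\rho':G\to GL(V)$ with $\overline{\rho'(g)}=\sigma(\theta(g))$ for all $g\in G$. Viewing $\rho'$ as a quandle representation of $Conj(G)$, it is irreducible, since $Q$ generates $G$. By $2)$ of Proposition \ref{pro}, $\rho'_{Inn(Q)}(L_g)=\overline{\rho'(g)}=\sigma(L_g)$. Since the translations $L_g$ generate $Inn(Q)$, we get $\rho'_{Inn(Q)}=\sigma$, and therefore $c=[\sigma]\in M_Q$. (This is precisely the content of Remark \ref{rem}.)

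Hence $M_Q=H^2(Inn(G),\mathbb{C}^\times)$, which combined with $M_Q\simeq Tor(Z_0)$ gives the proposition. The only delicate point is the identification of $M_Q$ with the set of classes induced by irreducible quandle representations, including the fact that this set is a subgroup. That identification is supplied by the preceding theorem, so the argument above should go through with only the routine checks on surjectivity of $\alpha$ and $\beta$ and on irreducibility.
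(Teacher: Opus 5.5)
Your proposal is correct and follows the paper's proof. You use Example~\ref{ex} to identify $Z_0$ with the center of $G(Q)$ and the preceding theorem to get $M_Q\simeq Tor(Z_0)$; you then combine the existence of an irreducible projective representation in every class with the lifting argument of Remark~\ref{rem} to conclude $M_Q=H^2(Inn(G),\mathbb{C}^\times)$, and the only difference is that you write out the factorization $\theta=\beta\circ\alpha$ and the lift through the Schur cover, which the paper merely cites.
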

\begin{proof}
Follows from the previous theorem, example \ref{ex} and remark \ref{rem}, since every class of $H^2(Inn(G),\mathbb{C}^{\times})$ correspond to at least one irreducible projective representation.
\end{proof}

\begin{corollary}
If $G$ is a Schur cover of another group, then $G(Q)_\alpha$ is a Schur cover of $Inn(Q)$.
\end{corollary}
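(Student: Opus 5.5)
The plan is to show that the stem extension $\overline{\theta}_{G(Q)}: G(Q)_\alpha \to Inn(Q)$ has nucleus isomorphic to the full Schur multiplier $H^2(Inn(Q),\mathbb{C}^\times)$. This is exactly the definition of a Schur cover given in the first section. By the proposition on finite quotients, $\overline{\theta}_{G(Q)}$ is already known to be a stem extension with nucleus $q_\alpha(Tor(Z_0))\simeq Tor(Z_0)$. What remains is to identify this nucleus with the Schur multiplier of $Inn(Q)$, where $Q=Conj(G)$ and $Inn(Q)=Inn(G)\simeq G/Z(G)$.

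The first step is to invoke the preceding proposition. For $G$ a Schur cover of some group and $Q=Conj(G)$, it gives $Tor(Z_0)\simeq M_Q\simeq H^2(Inn(G),\mathbb{C}^\times)$.

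It is worth recalling why $M_Q$ is the whole multiplier, since this is the only substantive input. By the proof of the theorem on Schur covers and Remark \ref{rem}, every irreducible projective representation of $Inn(Q)$ is of the form $\rho_{Inn(Q)}$ for some irreducible quandle representation $\rho$ of $Conj(G)$. To see this, compose the projective representation with $\beta : H\to Inn(Q)$, lift it to $G$ using the Schur cover property, and view the lift as a quandle representation via $G\to Conj(G)$. Since every class of $H^2(Inn(Q),\mathbb{C}^\times)$ is realized by at least one irreducible projective representation, the subgroup $M_Q$ of the theorem is all of $H^2(Inn(Q),\mathbb{C}^\times)$.

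Combining these, the nucleus of $1\to q_\alpha(Tor(Z_0))\to G(Q)_\alpha\to Inn(Q)\to 1$ is isomorphic to $H^2(Inn(Q),\mathbb{C}^\times)$. The extension is stem and surjective, the latter because the image of $\theta$ generates $Inn(Q)$. Hence $G(Q)_\alpha$ is a Schur cover of $Inn(Q)$.

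The main point needing care is the identification of the abstract isomorphism $Tor(Z_0)\simeq H^2$ with the map actually coming from the inflation–restriction sequence. This matters only if one wants the transgression map $Hom(Tor(Z_0),\mathbb{C}^\times)\to H^2(Inn(Q),\mathbb{C}^\times)$ itself to be an isomorphism. The definition of Schur cover in the paper only asks for $ker(\theta)\simeq H^2$, so the abstract isomorphism suffices. If the stronger form were wanted, I would argue as follows. The transgression is injective, since the extension is stem. Both groups are finite of the same order, using $Hom(A,\mathbb{C}^\times)\simeq A$ for finite abelian $A$. So the transgression is bijective.
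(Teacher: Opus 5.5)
Your proof is correct and follows the paper's route. The paper gives no separate proof, and the corollary follows from combining two earlier results: the proposition that $\overline{\theta}_{G(Q)}$ is a stem extension with nucleus $\simeq Tor(Z_0)$, and the preceding proposition giving $Tor(Z_0)\simeq M_Q\simeq H^2(Inn(G),\mathbb{C}^\times)$ when $G$ is a Schur cover. Your extra remarks on surjectivity, and on the transgression being bijective (injective by stemness, equal orders by finiteness), are correct but not needed under the paper's definition of Schur cover.
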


Combining example \ref{ex}, corollary \ref{cor} and the last proposition one can compute the torsion $ Tor(Z(G(Q)))$ of the center $Z(G(Q))$ of $G(Q)$, for $Q=Conj(Q_{4n})$ and $Q=Conj(D_{2n})$, from Schur multipliers and obtain the following table :

$$\begin{tabular}{ c|c|c|c|c} 

Q & Inn(Q) & $H^2(Inn(Q),\mathbb{C}^\times)$& $M_Q$ & Tor(Z(G(Q))) \\
\hline
$Conj(Q_{4n})$ $n$ odd & $D_{2n}$ & 0& 0 & 0\\ 
$Conj(Q_{4n})$ $n$ even  & $D_{2n}$ & $\mathbb{Z}/2\mathbb{Z}$ & $\mathbb{Z}/2\mathbb{Z}$ & $\mathbb{Z}/2\mathbb{Z}$\\ 
$Conj(D_{2n})$ $n$ odd & $D_{2n}$ & 0& 0 & 0\\ 
$Conj(D_{4n})$ $n$ odd & $D_{2n}$  & 0 & 0 &0 \\
$Conj(D_{4n})$ $n$ even  & $D_{2n}$ & $\mathbb{Z}/2\mathbb{Z}$ &$\mathbb{Z}/2\mathbb{Z}$  & $\mathbb{Z}/2\mathbb{Z}$\\

\end{tabular}$$
\\
According to \cite{leb}, for $Q=Conj(S_n)$, where $S_n$ is the symmetric group, the center of $G(Q)$ has no torsion. Hence $M_Q\simeq Tor(Z_0)$ is trivial, while $H^2(Inn(Q),\mathbb{C}^\times)$ is not for $n\geq 4$, $($$Inn(Q)\simeq S_n$ for $n\geq 3$$)$. This means that the only irreducible projective representations of $Inn(Q) $ induced by quandle irreducible representions are linearisable. It follows from proposition \ref{pro1} that the irreducible quandle representations of $Con(S_n)$ are the representations of the form $\chi \cdot \rho $ where $\chi$ is a quandle character and $\rho$ is an irreducible linear group representation of $S_n$.

\begin{remark}
For $Q=Conj(G)$, replacing the central extension $G(Q)\to Inn(Q)$ , by the central extension $G(Q) \to G$. One can definie as for $Inn(Q)$ a finite quotient of $G(Q)$ that is a stem extension of $G$. One shows that irreducible quandle representations of $Conj(G)$ can be constructed out of irreducible representations of the quotient and characters. One can also prove that $Conj(G)$ determines a subgroup of the Schur multiplier of $G$ corresponding to projective irreducible representation of $G$ that are induced by quandle representations of $Conj(G)$.
\end{remark}


\begin{thebibliography}{test}

\bibitem[Ka87]{Sc}
Karpilovsky, Gregory, \newblock The Schur Multiplier.\newblock Clarendon Press, 1987.
\bibitem[EM14]{EM}
Eisermann, Michael, \newblock Quandle Coverings and Their Galois Correspondence. \newblock Fundamenta Mathematicae, vol. 225, no. 1, pp. 103–67,  2014.
\bibitem[ME18]{rep} 
Elhamdadi, M. and Moutuou, E. kaïoum M.. \newblock Finitely stable racks and rack representations. \newblock  Communications in Algebra, 46(11), 4787–4802, 2018.
\bibitem[Dar22]{dar}
Jacques Darné, \newblock Nilpotent quandles. \newblock arxiv, 2022
\bibitem[LE24]{leb}
Lebed Victoria, Conjugation groups and structure groups of quandles, arxiv.
\bibitem[NDCV26]{str}
Rodr\'{i}guez-Nieto, Jos\'{e} Gregorio and Salazar-D\'{i}az, Olga Patricia and Vallejos-Cifuentes, Ricardo Esteban and Vel\'{a}squez, Ra\'{u}l, \newblock Rack representations and connections with groups representations. \newblock, Journal of Algebra and Its Applications, 2026.
\bibitem[MM26]{GQ}
Maassarani Mohamad, \newblock Groups and quandles. \newblock arxiv, 2026.



\end{thebibliography}
 \end{document}